\newtheorem{lem}{Lemma}
\newtheorem{thm}{Theorem}
\newtheorem{conj}{Conjecture}
\newtheorem{prop}{Proposition}
\def\diffd{\mathrm{d}}
\def\R{\mathbb R}
\def\uppar#1{^{{\scriptscriptstyle(}#1{\scriptscriptstyle)}}}
\title{A note of the convergence of the Fisher-KPP front centred around its
$\alpha$-level}
\author{Julien Berestycki\footnote{Department of Statistics and Magdalen College, University of Oxford, UK}, \'Eric Brunet\footnote{Laboratoire de Physique
Statistique, \'Ecole Normale
Sup\'erieure, PSL Research University; Universit\'e Paris Diderot Sorbonne
Paris-Cit\'e; Sorbonne Universit\'es UPMC Univ Paris 06; CNRS; 24 rue Lhomond,
75005 Paris, France.}}
\begin{document}
\maketitle

\begin{abstract}
We consider the solution $u(x,t)$ of the Fisher-KPP equation $\partial_t
u=\partial_x^2u+u-u^2$ centred around its $\alpha$-level
$\mu_t\uppar\alpha$ defined as $u(\mu_t\uppar\alpha,t)=\alpha$. It is well
known that for an initial datum that decreases fast enough, then
$u(\mu_t\uppar\alpha+x,t)$ converges as $t\to\infty$ to the critical
travelling wave. We study in this paper the speed of this convergence and
the asymptotic expansion of $\mu_t\uppar\alpha$ for large~$t$. It is known
from Bramson \cite{Bramson83} that for initial conditions that decay fast
enough, one has $\mu_t\uppar\alpha=2t-(3/2)\ln t+\text{Cste}+o(1)$. Work
is under way \cite{nrr} to show
that the $o(1)$ in the expansion is in fact
a $k\uppar\alpha/\sqrt t+\mathcal O(t^{\epsilon-1})$ for any $\epsilon>0$
for some $k\uppar\alpha$, where it is not clear at this point whether
$k\uppar\alpha$ depends or not on $\alpha$. We show that, unless
the time derivative of $\mu_t\uppar\alpha$ has a very unexpected behaviour
at infinity, the
coefficient $k\uppar\alpha$ does not, in fact, depend on $\alpha$.

We also conjecture that, for an initial condition that decays fast enough,
one has in fact $\mu_t\uppar\alpha=2t-(3/2)\ln
t+\text{Cste}-(3\sqrt\pi)/\sqrt t+g (\ln t)/t +o (1/t)$ for some
constant~$g$ which does not depend on $\alpha$.

\end{abstract}

\section{Introduction}
We consider the Fisher-KPP equation \cite{Fisher,KPP} which describes the evolution of
$(x,t)\mapsto  u(x,t)$:
\begin{equation}
\partial_t u = \partial_x^2 u +u-u^2,\qquad u(x,0)=u_0(x).
\label{FKPP}
\end{equation}

Bramson \cite{Bramson83} proved that if the initial condition $u_0(x)$ is
such that
\begin{equation}
\begin{cases}
0\le u_0(x)\le 1,\\
\displaystyle
\limsup_{x\to \infty} \frac1x \ln\bigg[  \int_x^{x(1+h)} u_0(y) \,\diffd
y   \bigg]\le -1&\text{for some (all) $h>0$},\\[3ex]
\displaystyle
\lim_{x\to-\infty}u_0(x)=1,
\end{cases}
\label{cond}
\end{equation}
then the shape of the front around an appropriately chosen centring term
$m_t$ converges to the critical wave $\omega(x)$:
\begin{equation}
u(m_t+x,t)\xrightarrow[t\to\infty]{}\omega(x)\quad\text{uniformly in $x$,}
\label{unif}
\end{equation}
where $\omega(x)$ is the unique
solution to
\begin{equation}
0=\omega'' + 2 \omega' +\omega-\omega^2,\qquad
\omega(-\infty)=1,\quad\omega(+\infty)=0,\quad\omega(0)=\frac12.
\label{omega}
\end{equation}
(The second line of \eqref{cond} means that $u_0(x)$ decays roughly as fast
or faster than $e^{-x}$. The third line could be weakened
considerably.)

Furthermore, if and only if $u_0(x)$ satisfies the stronger condition
\begin{equation}
\int\diffd x\,x e^x u_0(x)<\infty,
\label{stronger}
\end{equation}
then any valid centring term $m_t$ in the sense of \eqref{unif} must be of
the form
\begin{equation}
m_t= 2t -\frac32\ln t + C + o(1),
\label{Bram1}
\end{equation}
where the constant $C$ depends on the initial condition $u_0(x)$.

If \eqref{cond} holds then,
as shown in Section~\ref{sec:Proofs},
for each $\alpha$ and for each time $t$ if it
is large enough,
there exists a unique
$\mu_t\uppar\alpha$ such that
\begin{equation}
u(\mu_t\uppar\alpha,t)=\alpha.
\end{equation}
Furthermore, $t\mapsto\mu_t\uppar\alpha$ is $\mathcal C^1$ for $t$ large
enough.
Introducing $W\uppar\alpha$ as the unique antecedent of $\alpha$ by $\omega$,
\begin{equation}
\omega(W\uppar\alpha)=\alpha,
\end{equation}
it is then easy to see that
 $\mu_t\uppar\alpha-W\uppar\alpha$ must be a valid
choice for $m_t$ and that one has
\begin{equation}
u(\mu_t\uppar\alpha+x,t)\xrightarrow[t\to\infty]{}\omega(W\uppar\alpha+x)
\qquad\text{uniformly in $x$},
\end{equation}

In particular, if \eqref{stronger} holds
\begin{equation}
\mu_t\uppar\alpha=2t-\frac32\ln t + C +W\uppar\alpha+o(1).
\label{Bram}
\end{equation}
where $C$ is the constant from \eqref{Bram1} and, as such, depends on the
initial condition but not on $\alpha$.

It makes sense to try to determine the next term in the large $t$ expansion
of $\mu_t\uppar\alpha$. A famous conjecture \cite{evs} states that
\begin{equation}
\text{If $u_0(x)$ decays ``fast enough''},\qquad
\mu_t\uppar\alpha=2t-\frac32\ln t + C +W\uppar\alpha-\frac{3\sqrt\pi}{\sqrt
t}+o(t^{-1/2}),
\label{EvS}
\end{equation}
where they claim that, remarkably, the coefficient $-3\sqrt\pi$ of the
$t^{-1/2}$ term depends neither on $\alpha$ nor on the initial condition.
Two recent works \cite{Brunet2015,BBHR2015}
looking at linearised versions of the Fisher-KPP recover
suggest that \eqref{EvS} might 
only hold if $\int\diffd x\,x^2 u_0(x)<\infty$ (compare to the
condition~\eqref{stronger} under which \eqref{Bram} holds) ; in particular, if
$u_0(x)\sim A x^{\kappa}e^{-x}$ with $-3\le\kappa<-2$, one would have
Bramson's logarithmic correction~\eqref{Bram} but the first vanishing
correction would be different from that in~\eqref{EvS}.

Work is under way \cite{nrr} to prove that, indeed, the first vanishing
term in the expansion of $\mu_t\uppar\alpha$ is of order $t^{-1/2}$, but,
as of now, the precise value of the coefficient and, crucially, whether or
not it depends on $\alpha$, is still an open question.

\medskip

The goal of this letter is not to prove \eqref{EvS}, but to put some
constraint on what the first vanishing correction might look like.
For instance, our Theorem~\ref{thm1} states
that for any initial condition such that \eqref{cond}
holds, and any values of $\alpha$ and $\beta$ such that 
$0<\alpha<\beta<1$, one has
\begin{equation}
\text{If $2-\dot\mu_t\uppar\alpha=\mathcal O(t^{-\gamma})$ for some
$\gamma>0$,}\qquad
\text{then}\quad\mu_t\uppar\alpha-\mu_t\uppar\beta = W\uppar\alpha-W\uppar\beta+
 \mathcal O(t^{-\gamma}),
\label{rough}
\end{equation}
where $\dot\mu_t\uppar\alpha$ is the derivative of $t\to\mu_t\uppar\alpha$.

A natural question is of course how large can $\gamma$ be chosen in
\eqref{rough}.
In the Physics literature, it is often
assumed, and without batting an eye, that when \eqref{Bram} holds, then
$2-\dot\mu_t\uppar\alpha$ must be equivalent to $3/(2t)$. Of course, one is
not allowed in general to differentiate asymptotic expansions but,
intuitively, if the initial condition 
decays fast enough at infinity, then the heat operator in the Fisher-KPP
equation~\eqref{FKPP} should smooth everything and the functions
$\mu_t\uppar\alpha$ should be extremely well behaved for large times.
It would then seem that ``$2-\dot\mu_t\uppar\alpha=\mathcal O(t^{-1})$'' is
a fair conjecture. To our knowledge, there is no rigorous result on this,
but it would imply the
 following Conjecture:

\begin{conj}
Pick $\alpha$ and $\beta$ in $(0,1)$.
For any initial condition $u_0(x)$ such that \eqref{cond} holds, then
\begin{equation}
\mu_t\uppar\alpha-\mu_t\uppar\beta = W\uppar\alpha-W\uppar\beta+\mathcal
O\Big(\frac1t\Big).
\label{conj1eq}
\end{equation}
\label{conj1}
\end{conj}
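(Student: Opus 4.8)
The plan is to derive the Conjecture from Theorem~\ref{thm1}: if one can show $2-\dot\mu_t\uppar\alpha=\mathcal O(t^{-1})$, then \eqref{rough} with $\gamma=1$ yields \eqref{conj1eq} immediately. The whole difficulty therefore lies in controlling the derivative $\dot\mu_t\uppar\alpha$. To make it tractable I would differentiate the defining relation $u(\mu_t\uppar\alpha,t)=\alpha$ in time, obtaining $\partial_x u\,\dot\mu_t\uppar\alpha+\partial_t u=0$ at $x=\mu_t\uppar\alpha$, and then eliminate $\partial_t u$ using the equation~\eqref{FKPP} together with $u=\alpha$ at that point. This gives the exact identity
\begin{equation}
\dot\mu_t\uppar\alpha=-\,\frac{\partial_x^2 u(\mu_t\uppar\alpha,t)+\alpha(1-\alpha)}{\partial_x u(\mu_t\uppar\alpha,t)}.
\label{dotmu}
\end{equation}
By \eqref{unif} and interior parabolic regularity the slope and curvature at the level converge to $\omega'(W\uppar\alpha)$ and $\omega''(W\uppar\alpha)$; feeding the travelling-wave relation $\omega''=-2\omega'-\omega(1-\omega)$ from \eqref{omega} into \eqref{dotmu} cancels the two $\alpha(1-\alpha)$ terms and produces the limit $\dot\mu_t\uppar\alpha\to 2$.

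The size of $2-\dot\mu_t\uppar\alpha$ is then dictated by the rate at which the level-centred profile $x\mapsto u(\mu_t\uppar\alpha+x,t)$ approaches $\omega(W\uppar\alpha+\,\cdot\,)$ in $C^2$ near $x=0$, and this is exactly where the $\alpha$-independence of the leading correction enters. Writing, in Bramson's frame, $u(m_t+x,t)=\omega(x)+t^{-1/2}\phi(x)+o(t^{-1/2})$, a short computation shows that the $t^{-1/2}$ coefficient of $\mu_t\uppar\alpha$ equals $-\phi(W\uppar\alpha)/\omega'(W\uppar\alpha)$, so it is independent of $\alpha$---as claimed in \eqref{EvS}---precisely when $\phi$ is proportional to the translation mode $\omega'$. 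In that case $\omega(x)+t^{-1/2}\phi(x)$ is, to this order, a \emph{rigid shift} of $\omega$, and centring at the level removes it completely: the residual shape deformation is then of the next order, which I expect to be $\mathcal O(t^{-1})$ up to logarithms. Through \eqref{dotmu} it is this residual that governs $2-\dot\mu_t\uppar\alpha$, so establishing $\phi\propto\omega'$ together with an $\mathcal O(t^{-1})$ bound on the transverse part would close the argument.

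Concretely, I would linearise \eqref{FKPP} about $\omega$ in the frame moving at speed~$2$, giving the operator
\[
\mathcal L:=\partial_x^2+2\partial_x+1-2\omega,\qquad \mathcal L\,\omega'=0,
\]
and decompose the correction to $\omega$ into its component along the neutral mode $\omega'$ and a transverse part. The neutral direction carries the slow, diffusive $t^{-1/2}$ behaviour inherited from the front's tail, while the transverse part lives in the portion of the spectrum bounded away from $0$ and should decay faster. One then has to show, first, that the $t^{-1/2}$ component is aligned with $\omega'$, which is what makes $\phi\propto\omega'$, and second, that the transverse part is $\mathcal O(t^{-1})$ in a norm strong enough to control two spatial derivatives at a fixed level.

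The main obstacle is this second step, and with it the whole derivative estimate. The $t^{-1/2}$ correction is not generated by a spectral gap but by the continuous spectrum of $\mathcal L$ touching the imaginary axis---it is the diffusive signal coming from the slowly decaying region ahead of the front---so separating a genuine translation from a residual deformation at the sharper order $t^{-1}$ requires delicate weighted estimates on the linearised semigroup together with control of the nonlinear feedback. This is exactly the point at which one would like to, but rigorously cannot, differentiate Bramson's expansion \eqref{Bram}, and it is why the statement is offered here only as a conjecture.
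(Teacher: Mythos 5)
The statement you are addressing is not proved in the paper at all: it is stated as Conjecture~\ref{conj1}, and the paper's entire justification for it is precisely the reduction you give in your first paragraph --- the remarks after the theorems say explicitly that, with Theorem~\ref{thm1}, it would suffice to prove $2-\dot\mu_t\uppar\alpha=\mathcal O(t^{-1})$ for every $\alpha\in(0,1)$ --- together with the numerical evidence of Section~\ref{numev} for a step initial condition. So your opening move coincides with the paper's own reasoning, and your exact identity $\dot\mu_t\uppar\alpha=-\bigl(\partial_x^2u+\alpha(1-\alpha)\bigr)/\partial_x u$ at the level, with the cancellation against the travelling-wave relation giving $\dot\mu_t\uppar\alpha\to2$, is correct (this limit is the known fact \eqref{eta0}, cited from \cite{KPP,uchiyama}). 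But the remainder of your proposal does not close the argument, and you say so yourself: the crucial bound $2-\dot\mu_t\uppar\alpha=\mathcal O(t^{-1})$ is never established, so what you have is a careful restatement of the conjecture's motivation, not a proof.

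Two concrete reasons the sketch cannot be completed as written. First, there is a circularity in direction: your identity converts a \emph{rate} of convergence of the level-centred profile in $C^2$ near $x=0$ into a bound on $2-\dot\mu_t\uppar\alpha$, whereas Theorem~\ref{thm1} (and the whole paper) goes the opposite way, converting an assumed bound on $2-\dot\mu_t\uppar\alpha$ into a rate for the profile. Neither input is available unconditionally: Bramson's result gives the convergence \eqref{unif} with no rate, and the rate results of \cite{nrr} were still work in progress at the time. Second, the spectral decomposition you propose cannot deliver the needed ``transverse part is $\mathcal O(t^{-1})$'' estimate, because the linearised operator $\mathcal L=\partial_x^2+2\partial_x+1-2\omega$, in any space where $\omega'$ is a genuine zero mode, has essential spectrum touching the origin; there is no gap separating the translation mode from the rest of the spectrum, and both the $t^{-1/2}$ term and the putative $t^{-1}$ term are generated by the continuous spectrum, i.e.\ by the diffusive dynamics in the region of width $\sqrt t$ ahead of the front. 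Hence the ``transverse'' component does not decay faster by any gap argument, and the delicate weighted semigroup estimates you invoke are exactly the open problem, not a tool one can cite. Your honest conclusion --- that this is why the statement is offered as a conjecture --- is the right one; as a proof, the proposal has a genuine gap, and it is the same gap the paper itself leaves open.
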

In Section~\ref{numev}, we present some numerical evidence in support of
Conjecture~\ref{conj1} for a step initial condition.
What~\eqref{conj1eq} basically means is that
any term larger than $1/t$  in a large $t$
asymptotic expansion of $\mu_t\uppar\alpha$ must have a coefficient independent
of $\alpha$. In particular:
\begin{itemize}
\item If, as claimed in \cite{evs,nrr}, the first vanishing correction in
$\mu_t\uppar\alpha$ is of order $1/\sqrt t$ for initial conditions that
decay fast enough, then its coefficient must be
independent of $\alpha$.
\item The results of \cite{Brunet2015,BBHR2015} suggest that if the initial
condition is asymptotically of the form $u_0(x)\sim A x^\kappa e^{-x}$ with
$\kappa\in(-3,-2)$, then the first vanishing correction should be of order 
$t^{1+\frac\kappa2}$. If it is indeed the case, our conjecture implies that
the coefficient is independent of $\alpha$.
\end{itemize}
If Conjecture~\ref{conj1} turns out to be incorrect and if, for instance,
for some
initial condition $u_0(x)$, one has
$\mu_t\uppar\alpha=2t-\frac32\ln t+C+W\uppar\alpha
+k\uppar\alpha t^{-1/2}+\mathcal O(t^{-0.99})$ with a coefficient $k\uppar\alpha$
which is not a constant function of $\alpha$, then Theorem~\ref{thm1} below
implies that $2-\dot\mu_t\uppar\alpha$ is not a $\mathcal O(1/t)$ and
Theorem~\ref{thm3}
below implies that $\dot\mu_t\uppar\alpha$ oscillates around 2 at infinity,
which would be quite unexpected.

\medskip

In Section~\ref{further}, we present some work on a solvable model in the
Fisher-KPP class which was introduced in \cite{Brunet2015}. This leads
us to another Conjecture on the asymptotic expansion of $\mu_t\uppar\alpha$
\begin{conj}
\label{conj2}
For an initial condition $0\le u_0(x)\le 1$ with $\lim_{x\to-\infty}
u_0(x)=1$ and 
\begin{equation}
\int u_0(x) x^3 e^x\,\diffd x <\infty,
\end{equation}
one has
\begin{equation}
\mu_t\uppar\alpha=2t-\frac32\ln t + C +W\uppar\alpha-\frac{3\sqrt\pi}{\sqrt
t}+g \frac{\ln t}t + \mathcal O   \Big(\frac1t\Big),
\end{equation}
for some constant $g$ which, by
Conjecture~\ref{conj1}, does not depend on
$\alpha$. 
\end{conj}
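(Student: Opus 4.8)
The plan is to read off $\mu_t\uppar\alpha$ from a precise analysis of the \emph{leading edge} of the front, where $u$ is small and the equation linearises, and then to match to the critical wave $\omega$ across the crossover region. In the frame moving at speed~$2$, set $z=x-2t$ and write the linearised solution as $w(z,t)=e^{-z}\psi(z,t)$; a direct computation turns $\partial_t w=\partial_z^2 w+2\partial_z w+w$ into the bare heat equation $\partial_t\psi=\partial_z^2\psi$ with $\psi(\cdot,0)=e^{(\cdot)}u_0$. The crucial consequence is that the quantities governing the large-$t$ expansion are the moments of this heat datum, $\int z^n e^z u_0(z)\,\diffd z$, which explains the hierarchy of conditions $\int x^n e^x u_0\,\diffd x<\infty$: the case $n=1$ is Bramson's \eqref{stronger} and fixes the constant $C$, the case $n=2$ fixes the universal coefficient $-3\sqrt\pi$ of \eqref{EvS}, and the case $n=3$ is precisely what is required to reach order $(\ln t)/t$. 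This is why the hypothesis $\int x^3 e^x u_0\,\diffd x<\infty$ is the natural threshold for the statement.

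Next I would determine $\mu_t\uppar\alpha$ self-consistently. Since $\lambda=1$ is a double root of the linearised dispersion relation, the tail of the wave obeys $\omega(\zeta)\sim(A\zeta+B)\,e^{-\zeta}$ as $\zeta\to+\infty$, so matching $w\approx\omega$ forces $\psi$ to resemble the affine profile $z\mapsto A z+B$ at the moving edge. Substituting the ansatz
\[
\mu_t\uppar\alpha=2t-\frac32\ln t+C+W\uppar\alpha+k\,t^{-1/2}+g\,\frac{\ln t}{t}+\frac{c}{t}+\cdots
\]
into this matching condition and expanding the heat-kernel evolution order by order should produce the coefficients, each order calling on one further finite moment of $e^z u_0$: I expect $k=-3\sqrt\pi$ to come out independent of $u_0$ and of $\alpha$, and $g$ to appear at the next order, generally depending on $u_0$ but — as the statement asserts — not on $\alpha$. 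That last independence need not even be extracted from the formula: because $(\ln t)/t\gg 1/t$, it is forced by Conjecture~\ref{conj1} (equivalently by Theorem~\ref{thm1} under the mild control $2-\dot\mu_t\uppar\alpha=\mathcal O(t^{-1})$), which is exactly the sense in which the conclusion holds ``by Conjecture~\ref{conj1}''.

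The hard part is that all of the above is only the linear skeleton. To upgrade it to a theorem one must show that the nonlinearity $-u^2$, together with the curvature of $\omega$ in the crossover region, perturbs the expansion only at order $\mathcal O(1/t)$, so that it cannot contaminate the coefficient of $(\ln t)/t$. Controlling this nonlinear feedback to precision $o\big((\ln t)/t\big)$ is well beyond current techniques: through the McKean representation of \eqref{FKPP} it is equivalent to locating the maximum of branching Brownian motion to the same precision, whereas the available asymptotics pin down that maximum only to order $\mathcal O(1)$. This is precisely why the statement is a conjecture and not a theorem.

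Consequently, the route I would actually follow first is to prove the expansion \emph{exactly} on the solvable model of \cite{Brunet2015} discussed in Section~\ref{further}, where the leading-edge problem can be solved in closed form, the coefficient $g$ computed explicitly, and the moment thresholds checked directly; one would then argue that the same linearised mechanism — and hence the same structural expansion, with the universal $-3\sqrt\pi$ and the $(\ln t)/t$ term — carries over to the full Fisher–KPP equation. Matching the constant $C$ and the model-dependent $g$ between the solvable model and \eqref{FKPP} would remain the delicate point, but the universality of the $\alpha$-dependence is already guaranteed by Theorem~\ref{thm1} and Conjecture~\ref{conj1}.
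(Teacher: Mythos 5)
Your proposal ends up exactly where the paper does: since no proof of the nonlinear statement is available, the paper also ``supports'' Conjecture~\ref{conj2} by computing the expansion exactly on the solvable lattice model of \cite{Brunet2015} (where the $(\ln x)/x$ term appears precisely under the third-moment condition $\sum_x x^3 u_0(x)e^{\gamma_c x}<\infty$), formally inverting $t_x$ to get $x_t$, transposing by analogy to Fisher--KPP, and invoking Conjecture~\ref{conj1} for the $\alpha$-independence of $g$. Your leading-edge heat-equation heuristic and the remark on the intractability of the nonlinear feedback are consistent with (and a nice articulation of) the paper's reasoning; the only ingredient you lack is the paper's numerical evidence, and the only slight discrepancy is that the solvable-model computation suggests $g$ is universal (e.g.\ $\frac98(5-6\ln 2)$) rather than $u_0$-dependent, though the paper itself flags that value as speculative.
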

The work presented in Section~\ref{further} suggests also that, maybe,
$g=\frac98(5-6\ln 2)\approx0.946$, and this value is compatible with
numerical simulations. However, this value for $g$ relies on transposing
by analogy a result derived on a front equation which is quite
different from the Fisher-KPP equation, and it remains of a very
speculative nature.

\section{Results}
We restrict ourselves to initial conditions $u_0(x)$ such that
\eqref{cond} holds.
Pick $\alpha\in(0,1)$ and introduce
\begin{equation}
\eta_t = 2 -\dot \mu_t\uppar\alpha.
\label{defeta}
\end{equation}
Implicitly, $\eta_t$ depends on $\alpha$.
One has, for large time  \cite{KPP,uchiyama},
\begin{equation}
\eta_t\to 0.
\label{eta0}
\end{equation}

\begin{thm}
\label{thm1}
Pick $\alpha\in(0,1)$. For any initial condition $u_0(x)$ such that
\eqref{cond} holds, if
\begin{equation}
\eta_t:=2-\dot\mu_t\uppar\alpha=\mathcal O(t^{-\gamma}) \text{  for some $\gamma>0$,}
\label{14}
\end{equation}
then, for any $x_0>0$,
\begin{equation}
\label{thm1_1}
\max_{x\in[-x_0,0]
}\Big|u(\mu_t\uppar\alpha+x,t)-\omega(W\uppar\alpha+x)\Big|=\mathcal
O(t^{-\gamma}),
\end{equation}
which implies that for any $\beta\in(\alpha,1)$,
\begin{equation}
\mu\uppar\alpha_t-\mu\uppar\beta_t = W\uppar\alpha-W\uppar\beta + \mathcal
O(t^{-\gamma}).
\label{thm1_2}
\end{equation}
If, furthermore, $\alpha>\frac12$, then the ``$\max_{x\in[-x_0,0]}$'' in
\eqref{thm1_1} can be replaced by a ``$\max_{x\le0}$''.
\end{thm}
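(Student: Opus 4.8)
The plan is to work in the frame centred on the $\alpha$-level and to compare the front with the limiting wave through a parabolic maximum principle, reading off the rate $t^{-\gamma}$ directly from the forcing that the residual drift $\eta_t$ produces. Set $v(x,t)=u(\mu_t\uppar\alpha+x,t)$ and $\Omega(x)=\omega(W\uppar\alpha+x)$; by \eqref{omega} the translate $\Omega$ still solves $\Omega''+2\Omega'+\Omega-\Omega^2=0$, and by the definitions of $\mu_t\uppar\alpha$ and $W\uppar\alpha$ one has the free Dirichlet datum $v(0,t)=\Omega(0)=\alpha$. Differentiating $v$ and using \eqref{FKPP} gives $\partial_t v=\partial_x^2 v+(2-\eta_t)\partial_x v+v-v^2$ with $\eta_t$ as in \eqref{defeta}, so the deviation $\phi:=v-\Omega$ obeys
\begin{equation}
\partial_t\phi=\partial_x^2\phi+(2-\eta_t)\partial_x\phi+(1-v-\Omega)\phi-\eta_t\Omega',\qquad \phi(0,t)=0.
\end{equation}
Since $\Omega'$ is bounded, hypothesis \eqref{14} makes the source $-\eta_t\Omega'$ uniformly $\mathcal O(t^{-\gamma})$, while the reaction coefficient $1-v-\Omega=1-2\Omega-\phi$ tends to $1-2\Omega$ by the uniform convergence $\phi\to0$ from \eqref{unif}. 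I would prove \eqref{thm1_1} by constructing barriers $\pm C t^{-\gamma}p(x)$, where $p>0$ is a fixed strict supersolution of the linearised operator $\mathcal L_\Omega:=\partial_x^2+2\partial_x+(1-2\Omega)$, and then invoking the comparison principle; the datum at the initial time is absorbed by enlarging $C$.

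The decisive structural fact is the conjugation $\mathcal L_\Omega=e^{-x}\bigl(\partial_x^2-2\Omega\bigr)e^{x}$, which shows that the stability of $\mathcal L_\Omega$ is governed by the Schr\"odinger operator $-\partial_x^2+2\Omega$, whose potential satisfies $2\Omega(x)\ge 2\alpha>0$ for every $x\le 0$. When $\alpha>\tfrac12$ one even has $1-2\Omega\le 1-2\alpha<0$ on $x\le0$, and the constant profile $p\equiv1$ already works: substituting $\bar\phi=Ct^{-\gamma}$ leaves $Ct^{-\gamma}(v+\Omega-1)-\gamma Ct^{-\gamma-1}+\eta_t\Omega'$, whose leading $\mathcal O(t^{-\gamma})$ part is positive once $C$ is large, since $v+\Omega-1$ is bounded below by a positive constant while $|\eta_t\Omega'|\le M\|\Omega'\|_\infty t^{-\gamma}$. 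Comparison on $(-\infty,0]$ then yields $\max_{x\le0}|\phi|=\mathcal O(t^{-\gamma})$, the strengthened conclusion for $\alpha>\tfrac12$. For a general $\alpha$ the coefficient $1-2\Omega$ becomes positive on the bounded window $[-W\uppar\alpha,0]$, so a constant barrier no longer suffices; instead I would take $p$ to solve $-\mathcal L_\Omega p=c_0>0$ on $[-x_0,0]$ with positive boundary values. This is possible because, by the conjugation above, the Dirichlet principal eigenvalue of $-\mathcal L_\Omega$ equals that of $-\partial_x^2+2\Omega$, which is $\ge 2\alpha>0$; hence $-\mathcal L_\Omega$ is invertible with a positive resolvent and the boundary value problem has a positive solution $p$. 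The resulting $\pm Ct^{-\gamma}p$ dominate $\phi$ on $[-x_0,0]$ and give \eqref{thm1_1}.

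The main obstacle is the control of $\phi$ at the left edge of the domain \emph{uniformly in time}: running the comparison on $(-\infty,0]$ (for $\alpha>\tfrac12$) requires $\phi(x,t)\to0$ as $x\to-\infty$ uniformly in $t$, and running it on $[-x_0,0]$ requires an a priori bound $\phi(-x_0,t)=\mathcal O(t^{-\gamma})$ at the left endpoint, neither of which is handed to us by the mere uniform convergence \eqref{unif}. This is where the ground-state substitution $\phi=(-\Omega')\,w$ is useful, since it removes the order-one potential and leaves a problem with a small zeroth-order coefficient, so that the spectral gap from the Dirichlet condition at $0$ can be used to propagate the rate inward from the stabilising tail $1-2\Omega\to-1$ at $-\infty$. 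I expect this far-field bookkeeping, rather than the algebra of the barriers, to be the delicate part, and it is presumably why the clean ``$\max_{x\le0}$'' statement is asserted only for $\alpha>\tfrac12$.

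Finally, the passage from \eqref{thm1_1} to \eqref{thm1_2} is elementary. Fix $\beta\in(\alpha,1)$ and set $\delta=W\uppar\beta-W\uppar\alpha<0$, so that $\Omega(\delta)=\beta$. Writing $x^\ast=\mu_t\uppar\beta-\mu_t\uppar\alpha$ one has $v(x^\ast,t)=u(\mu_t\uppar\beta,t)=\beta$, and for large $t$ the front $v(\cdot,t)$ is strictly decreasing near this level (by the local uniform convergence of $v(\cdot,t)$ and its $x$-derivative to the strictly decreasing $\Omega$), so $x^\ast$ is the unique such point and $x^\ast\in[-x_0,0]$ once $x_0>|\delta|$. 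Then $|\Omega(x^\ast)-\Omega(\delta)|=|\Omega(x^\ast)-v(x^\ast,t)|=|\phi(x^\ast,t)|=\mathcal O(t^{-\gamma})$ by \eqref{thm1_1}, and since $\Omega'(\delta)\neq0$ the inverse function theorem gives $|x^\ast-\delta|=\mathcal O(t^{-\gamma})$, which is exactly \eqref{thm1_2}.
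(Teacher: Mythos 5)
Your overall architecture (comparison principle in the frame of the $\alpha$-level), your treatment of the case $\alpha>\frac12$ via the constant barrier $Ct^{-\gamma}$, and your deduction of \eqref{thm1_2} from \eqref{thm1_1} are all sound and essentially coincide with the paper's argument. The gap is in the main case $\alpha\le\frac12$, i.e.\ precisely the case for which the theorem restricts the conclusion to a bounded window $[-x_0,0]$. To run your barrier $Ct^{-\gamma}p(x)$ on $[-x_0,0]$ you need, as boundary input for the comparison principle, the a priori estimate $|\phi(-x_0,t)|\le Ct^{-\gamma}p(-x_0)$ at the left endpoint --- which is exactly the conclusion \eqref{thm1_1} you are trying to prove, so the argument as written is circular. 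You acknowledge this, but the remedy you sketch (the ground-state substitution $\phi=(-\Omega')w$) is not carried out and does not obviously close the gap: since $-\Omega'(x)\to0$ exponentially as $x\to-\infty$ (like $e^{(\sqrt2-1)x}$ near the back of the wave), the new unknown $w=\phi/(-\Omega')$ is not known to be bounded, so the far-field control needed to run a comparison argument in the $w$ variable is again the missing ingredient rather than a consequence of the substitution.

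The fix is already contained in what you call the decisive structural fact, provided the conjugation is used as a \emph{change of unknown on the whole half-line} rather than as a spectral statement on a bounded interval. This is what the paper does: set $r(x,t)=e^x\phi(x,t)$, which satisfies \eqref{mainr} with $r(0,t)=0$, and compare on all of $x\le0$. In the $r$-equation the zeroth-order coefficient is $-(2\Omega+\phi)$, and this is $\le-\alpha$ for all $x\le0$ once $t$ is large --- uniformly, with no bad window, because $\Omega\ge\alpha$ on the entire half-line and $\phi\to0$ uniformly --- while the source $(r-\Omega'e^x)\eta_t$ is $\mathcal O(|\eta_t|)$ uniformly in $x\le0$, both $r$ and $\Omega'e^x$ being bounded there. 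Hence a spatially constant barrier of size $\mathcal O(t^{-\gamma})$ is a supersolution; there is no left endpoint to control, and the behaviour as $x\to-\infty$ is harmless because $|r|\le e^x\le1$, so the comparison principle for bounded solutions of equations with bounded coefficients (Phragm\'en--Lindel\"of) applies --- this also disposes of the worry you raise in your $\alpha>\frac12$ case, where no uniform-in-$t$ decay of $\phi$ at $-\infty$ is actually required. One obtains $\max_{x\le0}|r(x,t)|=\mathcal O(t^{-\gamma})$, which is Proposition~\ref{prop} combined with Lemma~\ref{lem} of the paper, and then \eqref{thm1_1} follows from $|\phi(x,t)|=e^{-x}|r(x,t)|\le e^{x_0}\max_{y\le0}|r(y,t)|$ on $[-x_0,0]$. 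The factor $e^{x_0}$ incurred in undoing the conjugation is exactly why, for general $\alpha$, the statement is a maximum over a bounded window rather than over all $x\le0$.
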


\begin{thm}
\label{thm2}
Pick $\alpha\in(0,1)$. For any initial condition $u_0(x)$ such that
\eqref{cond} holds, if
\begin{equation}
\eta_t:=2-\dot\mu_t\uppar\alpha\ne0\text{ for $t$ large enough}\qquad
\text{and}\qquad
\frac{\dot\eta_t}{\eta_t}\to0,
\label{17}
\end{equation}
then, for any $x_0>0$,
\begin{equation}
\max_{x\in[-x_0,0]}\left|\frac{u(\mu_t\uppar\alpha+x,t)-\omega(W\uppar\alpha+x)}
{\eta_t}
-
\left(\Phi(W\uppar\alpha +x)
-\frac{\Phi(W\uppar\alpha)}{\omega'(W\uppar\alpha)}
\omega'(W\uppar\alpha+x)\right)
\right|
\xrightarrow[t\to\infty]{}0,
\label{thm2_1}
\end{equation}
where $\Phi$ is
\begin{equation}
\label{defPhi}
\Phi(x)=\omega'(x)\int_0^x\diffd y\,
\frac{e^{-2y}}{\omega'(y)^2}\int_{-\infty}^y\kern-1em\diffd
z\,\omega'(z)^2e^{2z}.
\end{equation}
This implies that for any $\beta\in(\alpha,1)$,
\begin{equation}
\mu\uppar\alpha_t-\mu\uppar\beta_t = W\uppar\alpha-W\uppar\beta
- \eta_t\left[
\frac{\Phi(W\uppar\alpha)}{\omega'(W\uppar\alpha)}
-
\frac{\Phi(W\uppar\beta)}{\omega'(W\uppar\beta)}+o(1)\right].
\label{thm2_2}
\end{equation}
If, furthermore, $\alpha>\frac12$, then the ``$\max_{x\in[-x_0,0]}$'' in
\eqref{thm2_1} can be replaced by a ``$\max_{x\le0}$''.
\end{thm}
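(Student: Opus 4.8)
The plan is to pass to the frame moving with the $\alpha$-level and to analyse the rescaled discrepancy $W_t:=w/\eta_t$, where $w(x,t):=u(\mu_t\uppar\alpha+x,t)-\omega(W\uppar\alpha+x)$. Writing $\Omega(x):=\omega(W\uppar\alpha+x)$ (so $\Omega'=\omega'(W\uppar\alpha+x)$) and using $\dot\mu_t\uppar\alpha=2-\eta_t$, a direct change of variables turns \eqref{FKPP} into
\[
\partial_t w = \mathcal L w - \eta_t\,\Omega' - \eta_t\,\partial_x w - w^2,
\qquad
\mathcal L w := \partial_x^2 w + 2\,\partial_x w + (1-2\Omega)\,w,
\]
subject to the two exact constraints $w(0,t)=0$ for all $t$ (by the very definition of $\mu_t\uppar\alpha$) and $\|w(\cdot,t)\|_\infty\to0$ (by \eqref{unif}). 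Here $\mathcal L$ is the linearisation of the travelling-wave operator; in the variable $\xi=W\uppar\alpha+x$ it reads $Lf=f''+2f'+(1-2\omega)f$, and differentiating \eqref{omega} gives $L\omega'=0$, so $\omega'$ is the neutral translation mode.

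First I would pin down the target profile. Since $\omega'\in\ker L$, the inhomogeneous equation $L\Phi=\omega'$ is solved by reduction of order, and the explicit primitive \eqref{defPhi} is exactly such a solution: inserting $\Phi=\omega'J$ with $J'=e^{-2\xi}I/\omega'^2$ and $I(\xi)=\int_{-\infty}^\xi\omega'(z)^2e^{2z}\,\diffd z$, the term carrying $L\omega'=0$ drops out, the relation $\omega'J''=-2(\omega'+\omega'')J'+\omega'$ cancels the remaining $J'$ contributions, and one is left with $L\Phi=\omega'$. The general solution of $\mathcal L\psi=\Omega'$ that stays bounded as $x\to-\infty$ is then $\Phi(W\uppar\alpha+\cdot)+c\,\omega'(W\uppar\alpha+\cdot)$ — the second homogeneous solution $\omega'\int e^{-2y}/\omega'^2$ blows up at $-\infty$ and is discarded — and the constraint $\psi(0)=0$ fixes $c=\Phi(W\uppar\alpha)/\omega'(W\uppar\alpha)$. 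This uniquely selects the profile
\[
\psi(x):=\Phi(W\uppar\alpha+x)-\frac{\Phi(W\uppar\alpha)}{\omega'(W\uppar\alpha)}\,\omega'(W\uppar\alpha+x)
\]
appearing in \eqref{thm2_1}.

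Dividing the equation for $w$ by $\eta_t$ (legitimate since $\eta_t\ne0$ for large $t$) gives
\[
\partial_t W_t = \mathcal L W_t - \Omega' - \frac{\dot\eta_t}{\eta_t}\,W_t - \eta_t\big(\partial_x W_t + W_t^{\,2}\big).
\]
By assumption \eqref{17} one has $\dot\eta_t/\eta_t\to0$, and by \eqref{eta0} one has $\eta_t\to0$, so every term except $\mathcal L W_t-\Omega'$ should be asymptotically negligible and any limit of $W_t$ should be a bounded stationary solution of $\partial_\tau W=\mathcal L W-\Omega'$ vanishing at $x=0$, i.e.\ exactly $\psi$. The \emph{main obstacle} is making this rigorous through the a priori estimate $\|W_t\|=\mathcal O(1)$, i.e.\ $w=\mathcal O(\eta_t)$: we know $w\to0$ and $\eta_t\to0$ separately, but not a priori that the response $w$ is no larger than the forcing $\eta_t\Omega'$ that drives it. This is where the pinning $w(0,t)=0$ is essential, as it removes the neutral direction $\omega'$ that would otherwise obstruct any contraction. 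The favourable sign of the zeroth-order coefficient $1-2\omega(\xi)<0$ on $\{\xi<0\}$ then drives the estimate: when $\alpha>\tfrac12$ one has $W\uppar\alpha<0$, so $x\le0$ forces $\xi<0$ throughout and $\mathcal L$ is dissipative on all of $(-\infty,0]$, whence a global comparison closes the bound up to $x\to-\infty$; when $\alpha\le\tfrac12$ the window $\xi\in(0,W\uppar\alpha]$ has the unfavourable sign, so I would confine the uniform estimate to a compact $[-x_0,0]$, the bounded bad-sign part being absorbed. (The region $x>0$, the pulled leading edge, is never controlled by this local analysis.) Concretely I expect to use two-sided barriers $\eta_t\psi\pm\varepsilon_t$ and feed the residual $\dot\eta_t\psi+\mathcal O(\eta_t^2)=o(\eta_t)$ into the parabolic comparison principle. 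Once $\|W_t\|=\mathcal O(1)$ is in hand, interior Schauder estimates give equicontinuity of $\{W_t\}$; any subsequential limit $W_*$ is bounded, solves $\mathcal L W_*=\Omega'$ with $W_*(0)=0$, hence equals $\psi$ by the uniqueness above, so $W_t\to\psi$, which is \eqref{thm2_1}.

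Finally, \eqref{thm2_2} follows from \eqref{thm2_1} by reading the profile at the $\beta$-level. Writing $\mu_t\uppar\beta-\mu_t\uppar\alpha=(W\uppar\beta-W\uppar\alpha)+\delta_t$ with $\delta_t\to0$, the identity $u(\mu_t\uppar\beta,t)=\beta=\omega(W\uppar\beta)$ together with \eqref{thm2_1} evaluated at $x=\mu_t\uppar\beta-\mu_t\uppar\alpha$ (so that $W\uppar\alpha+x=W\uppar\beta+\delta_t$) yields, after a first-order Taylor expansion of $\omega$ about $W\uppar\beta$ and using $\delta_t=\mathcal O(\eta_t)$,
\[
0 = \omega'(W\uppar\beta)\,\delta_t + \eta_t\,\psi\big(W\uppar\beta-W\uppar\alpha\big) + o(\eta_t).
\]
Since $\psi(W\uppar\beta-W\uppar\alpha)=\Phi(W\uppar\beta)-\tfrac{\Phi(W\uppar\alpha)}{\omega'(W\uppar\alpha)}\,\omega'(W\uppar\beta)$, dividing by $\omega'(W\uppar\beta)$ and recalling $\mu_t\uppar\alpha-\mu_t\uppar\beta=-(W\uppar\beta-W\uppar\alpha)-\delta_t$ gives exactly \eqref{thm2_2}, with the bracketed $o(1)$ absorbing $o(\eta_t)/\eta_t$. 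The extension of the $\max$ to all $x\le0$ when $\alpha>\tfrac12$ is inherited from the global version of the a priori bound established in the third step.
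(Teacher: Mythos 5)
Your identification of the limiting profile $\psi(x)=\Phi(W\uppar\alpha+x)-\frac{\Phi(W\uppar\alpha)}{\omega'(W\uppar\alpha)}\,\omega'(W\uppar\alpha+x)$ (the unique bounded-at-$-\infty$ solution of $\mathcal L\psi=\tilde\omega'$, $\psi(0)=0$, where $\mathcal Lf=f''+2f'+(1-2\tilde\omega)f$), your derivation of \eqref{thm2_2} from \eqref{thm2_1}, and your comparison argument in the case $\alpha>\frac12$ all agree with the paper (your $w$ is the paper's $\delta$ and your equation is \eqref{maindelta}). The genuine gap sits exactly at what you yourself call the main obstacle --- the a priori bound $w=\mathcal O(\eta_t)$ --- in the case $\alpha\le\frac12$. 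In the unweighted variable the zeroth-order coefficient $1-2\tilde\omega(x)$ is \emph{positive} on the window $x\in(-W\uppar\alpha,0]$ when $\alpha<\frac12$; this bad-sign window is adjacent to the pinning point $x=0$ and hence contained in \emph{every} interval $[-x_0,0]$, so restricting to a compact interval does not remove it. More fatally, any comparison or barrier argument on $[-x_0,0]$ requires control on the whole parabolic boundary, and on the lateral boundary $\{x=-x_0\}$ you only know $w(-x_0,t)\to0$, not $w(-x_0,t)=\mathcal O(\eta_t)$ --- which is the very estimate being proved: the argument is circular. The same missing dissipativity resurfaces in your final step, since identifying a subsequential limit of $w/\eta_t$ with $\psi$ needs a Liouville property for bounded eternal solutions of $\partial_\tau V=\mathcal LV-\tilde\omega'$ with $V(0,\cdot)=0$, which your unweighted estimates do not supply when $\alpha<\frac12$.

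What is missing is the paper's key device: the conjugation $r(x,t)=e^x\delta(x,t)$ of \eqref{mainr}. Since $e^{x}\,\mathcal L(e^{-x}f)=f''-2\tilde\omega f$, the weighted potential $-(2\tilde\omega+\delta)\le-\alpha<0$ holds uniformly on the \emph{whole} half-line $x\le0$ for every $\alpha\in(0,1)$ and $t$ large. Writing $r=\eta_t[\Psi+s]$ with $\Psi=e^x\psi$ as in \eqref{defPsi}--\eqref{PsiPhi}, the comparison for $s$ (equation \eqref{eqs}) is closed with an $x$-independent barrier, the only boundary data being $s(0,t)=0$ and boundedness as $x\to-\infty$, so no lateral boundary ever enters. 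Undoing the weight costs a factor $e^{-x}\le e^{x_0}$, which is precisely why the theorem is stated on compacts $[-x_0,0]$ for general $\alpha$ but on all of $x\le0$ only for $\alpha>\frac12$ (where your direct, unweighted argument is essentially the paper's own). Two secondary points: (i) the paper divides by $\eta_t$ \emph{before} linearising, so $\dot\eta_t/\eta_t$ enters the potential and the forcing only needs to be $o(1)$; your barrier $\eta_t\psi\pm\varepsilon_t$ for $w$ instead needs $\varepsilon_t=o(\eta_t)$, i.e.\ $e^{-\kappa t}\int_{t_0}^t o(\eta_u)e^{\kappa u}\,\diffd u=o(\eta_t)$, which does follow from \eqref{17} but requires an extra argument comparing $\eta_u$ to $\eta_t e^{\epsilon(t-u)}$; (ii) as written, your proof is therefore complete essentially only for $\alpha>\frac12$.
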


\begin{thm}
\label{thm3}
Pick $\alpha\in(0,1)$. For any initial condition $u_0(x)$ such that
\eqref{cond} holds, if
\begin{equation}
\begin{cases}
\displaystyle
\mu_t\uppar\alpha = 2t -\frac32\ln t + C + W\uppar\alpha - \frac{g}{\sqrt t} + 
 \mathcal O(t^{-\gamma})\quad\text{for some $\gamma\in\big(\frac12,1\big]$,}
\\[1ex]
\displaystyle
\text{$\eta_t:=2-\dot\mu_t\uppar\alpha$ has a constant sign for $t$ large enough},
\end{cases}
\label{propmualpha}
\end{equation}
then, for any $x_0>0$,
\begin{equation}
\max_{x\in[-x_0,0]
}\Big|u(\mu_t\uppar\alpha+x,t)-\omega(W\uppar\alpha+x)\Big|=\mathcal
O(t^{-\gamma}),
\label{thm3_1}
\end{equation}
which implies that for any $\beta\in(\alpha,1)$,
\begin{equation}
\mu_t\uppar\beta = 2t -\frac32\ln t + C + W\uppar\beta
- \frac{g}{\sqrt t} + \mathcal O(t^{-\gamma}),
\label{thm3_2}
\end{equation}
where we emphasize that the coefficient $g$ is the same as in
\eqref{propmualpha}.

If, furthermore, $\alpha>\frac12$, then the ``$\max_{x\in[-x_0,0]}$'' in
\eqref{thm3_1} can be replaced by a ``$\max_{x\le0}$''.
\end{thm}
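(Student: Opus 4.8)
The plan is to prove the uniform front estimate \eqref{thm3_1} and then read off \eqref{thm3_2} by inverting $\omega$, exactly as \eqref{thm1_2} is deduced from \eqref{thm1_1}. The difficulty is that \eqref{propmualpha} constrains $\mu_t\uppar\alpha$ itself and not its derivative, so one cannot simply differentiate it to recover the pointwise hypothesis \eqref{14} of Theorem~\ref{thm1}: indeed, $\eta_t$ could a priori carry brief large spikes without violating \eqref{propmualpha}, since the signed increment of $\mu_t\uppar\alpha$ is insensitive to such spikes. The constant-sign condition is precisely what rules this out at the level of integrals. I would therefore re-run the analysis underlying Theorem~\ref{thm1} in the moving frame $v(x,t)=u(\mu_t\uppar\alpha+x,t)$, for which $w:=v-\omega(W\uppar\alpha+\cdot)$ solves $\partial_t w=\mathcal L w-\eta_t\,\partial_x v-w^2$ with $\mathcal L w=\partial_x^2 w+2\partial_x w+(1-2\omega(W\uppar\alpha+\cdot))w$, subject to the centring constraint $w(0,t)=0$, but feeding in an integrated rather than pointwise control of $\eta$.

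The new ingredient is that \eqref{propmualpha} controls the \emph{signed} integral of $\eta$ with no differentiation: for $t_*\le t$,
\begin{equation}
\int_{t_*}^{t}\eta_s\,\diffd s = 2(t-t_*)-\big(\mu_t\uppar\alpha-\mu_{t_*}\uppar\alpha\big)
= \tfrac32\ln\tfrac{t}{t_*}+g\big(t^{-1/2}-t_*^{-1/2}\big)+\mathcal O(t_*^{-\gamma}).
\end{equation}
Because $\eta_s$ has a constant sign, this immediately gives the \emph{unsigned} bound $\int_{t_*}^{t}|\eta_s|\,\diffd s=\big|\int_{t_*}^{t}\eta_s\,\diffd s\big|$, and in particular the window estimate $\int_{t}^{t+h}|\eta_s|\,\diffd s=\mathcal O(h/t)+\mathcal O(t^{-\gamma})$ for $0\le h\le t$. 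This is the quantity that actually enters the Duhamel representation of $w$: the forcing $-\eta_s\partial_x v$ is integrated against the linearized semigroup $e^{(t-s)\mathcal L}$, so it is $\int|\eta_s|$ weighted by the semigroup, and not the pointwise values of $\eta_s$, that must be controlled. Without the sign hypothesis an oscillating $\eta$ could have a small signed integral yet a large $\int|\eta|$, and the estimate would fail; the constant sign is exactly what forbids this.

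I would then feed these window bounds into the same semigroup estimates used for Theorem~\ref{thm1}. The translation zero-mode $\omega'(W\uppar\alpha+\cdot)$ of $\mathcal L$ does not decay, but its coefficient is pinned by the centring constraint $w(0,t)=0$; equivalently, using $\partial_t w(0,t)=0$ in the equation expresses $\eta_t=[\partial_x^2 w(0,t)+2\partial_x w(0,t)]/\partial_x v(0,t)$ with $\partial_x v(0,t)\to\omega'(W\uppar\alpha)\neq0$, so only the diffusively decaying part of the semigroup acts on the remaining components. Combined with the integrated forcing bound this should yield $\max_{x\in[-x_0,0]}|w(x,t)|=\mathcal O(t^{-\gamma})$, which is \eqref{thm3_1}. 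A short bootstrap closes the loop: the a priori convergence \eqref{unif} gives $\sup|w|\to0$, parabolic interior estimates bound $\partial_x w(0,s)$ and $\partial_x^2 w(0,s)$, hence $\eta_s$, by $\sup|w|$ at slightly earlier times, and the Duhamel bound then upgrades the decay to the claimed rate. The extension of the maximum to all $x\le0$ when $\alpha>\tfrac12$ uses, as in Theorem~\ref{thm1}, that $1-2\omega(W\uppar\alpha+x)<0$ on that range.

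Finally, \eqref{thm3_1} yields \eqref{thm3_2}: writing $u(\mu_t\uppar\alpha+x,t)=\omega(W\uppar\alpha+x)+\mathcal O(t^{-\gamma})$ uniformly and solving $u(\mu_t\uppar\beta,t)=\beta=\omega(W\uppar\beta)$ for the offset, using that $\omega$ has non-vanishing derivative, gives $\mu_t\uppar\beta-\mu_t\uppar\alpha=W\uppar\beta-W\uppar\alpha+\mathcal O(t^{-\gamma})$; substituting \eqref{propmualpha} then produces the expansion of $\mu_t\uppar\beta$ with the \emph{same} coefficient $g$. The main obstacle is the middle step: justifying that the diffusive, non-exponential decay of $e^{(t-s)\mathcal L}$ around the critical wave, together with the zero-mode bookkeeping forced by the centring, converts the merely integrated control of $\eta$ into the pointwise-in-$x$ bound \eqref{thm3_1}. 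This is where the technical core of Theorems~\ref{thm1} and~\ref{thm2} is reused, and where the constant-sign hypothesis is indispensable rather than cosmetic.
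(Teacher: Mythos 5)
Your two framing observations are exactly right, and they coincide with the paper's own key insight and endgame: the hypothesis \eqref{propmualpha} only controls \emph{integrals} of $\eta$, the constant-sign assumption is precisely what converts the signed integral $\int\eta_s\,\diffd s$ (computable from the expansion of $\mu_t\uppar\alpha$) into control of $\int|\eta_s|\,\diffd s$, and the passage from \eqref{thm3_1} to \eqref{thm3_2} is just the invertibility of $\omega$ near $W\uppar\beta$ applied to $\beta=u\big(\mu_t\uppar\alpha+(\mu_t\uppar\beta-\mu_t\uppar\alpha),t\big)$, followed by substitution of \eqref{propmualpha}, which forces the same coefficient $g$.

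The genuine gap is the middle step, and you flag it yourself: you never prove the linear estimate that turns this integrated control of $\eta$ into the uniform bound \eqref{thm3_1}. The route you sketch --- Duhamel against the semigroup $e^{(t-s)\mathcal L}$ of the linearization around the critical wave, with the translation mode ``pinned'' by the centring --- is not a routine reuse of available estimates: for the \emph{critical} KPP wave, $\mathcal L$ has essential spectrum touching $0$, so there is no spectral gap, no clean projection separating $\omega'$ from the continuous spectrum, and the ``diffusively decaying part of the semigroup'' you invoke is exactly the hard object whose construction the paper is careful to avoid. What the paper does instead, and what you would need to supply, is elementary: with $\delta(x,t)=u(\mu_t\uppar\alpha+x,t)-\tilde\omega(x)$ and $r=e^x\delta$, restrict to $x\le0$, where the centring gives the Dirichlet condition $r(0,t)=0$ and decouples the half-line; there the potential in \eqref{mainr} satisfies $2\tilde\omega+\delta\ge\alpha$ for large times, and the comparison principle yields Proposition~\ref{prop}, i.e.\ $\max_{x\le0}|r(x,t)|\le e^{-\alpha t}\big(e^{\alpha t_0}+c\int_{t_0}^t\diffd u\,|\eta_u|e^{\alpha u}\big)$ --- an \emph{exponential} kernel, not a diffusive one. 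It is against this kernel that the integrated information is used: by constant sign the absolute value moves outside the integral, one decomposes $\eta_u=\frac{3}{2u}-\frac{g}{2u^{3/2}}+\psi_u$ with $\int_t^\infty\psi_u\,\diffd u=\mathcal O(t^{-\gamma})$, and each piece is handled by Lemma~\ref{lem} (the $\psi$ piece by an integration by parts). Your window bounds $\int_t^{t+h}|\eta_s|\,\diffd s=\mathcal O(h/t)+\mathcal O(t^{-\gamma})$ would indeed suffice once summed against such an exponential kernel, so your argument could be completed by proving a bound of the type \eqref{eqprop}; but as written, the decisive analytic ingredient is missing and is replaced by an appeal to machinery that is both unproven and substantially harder than what the theorem requires.
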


\noindent\textit{Remarks:}
\begin{itemize}
\item Theorem~\ref{thm2} is more precise than Theorem~\ref{thm1}, but
requires to make
some assumptions on the second derivative on $\mu_t\uppar\alpha$.
\item In Theorem~\ref{thm2}, if $2-\dot\mu_t\uppar\beta$ satisfies the same
hypothesis as $\eta_t=2-\dot\mu_t\uppar\alpha$, then it is easy to see that
necessarily
$2-\dot\mu_t\uppar\beta\sim2-\dot\mu_t\uppar\alpha$.
\item In Theorem~\ref{thm2}, one checks that $\Phi$ is
 the unique solution to
\begin{equation}
\Phi''+2\Phi'+(1-2\omega)\Phi
=\omega'(x),\qquad
\Phi(0)=0,\qquad\Phi(-\infty)=0.
\end{equation}
\item The results above concern only convergence for negative
$x$. However, in each Theorem, we could replace the
``$\max_{x\in[-x_0,0]}$''
by a ``$\max_{x\le x_0}$'' if one assume that the hypothesis on
$\mu_t\uppar\alpha$ does not hold only for the one value of $\alpha$ that
we pick, but holds in fact for all values of $\alpha$ (as in ``there exists
a $\gamma$ such that \eqref{14} or \eqref{propmualpha} hold for all
$\alpha$'', or ``\eqref{17} holds for all $\alpha$'').
Indeed, one would
simply have to apply the Theorems as written above once for an $\alpha'$ small
enough to encompass what happens at $x=x_0$, another time for an $\alpha''$
larger than $1/2$, and then glue together the results.
\item The theorems do not assume that $u_0(x)$ is such that we are in the
regime~\eqref{stronger} with the $-\frac32\ln t$ of Bramson. It merely assumes that the
critical travelling wave $\omega$ is reached.
\item With Theorem~\ref{thm1}, it would be sufficient to prove that
$2-\dot\mu_t\uppar\alpha=\mathcal O(t^{-1})$ for any $\alpha\in(0,1)$
to obtain Conjecture~\ref{conj1}.
\end{itemize}

\section{Proofs}
\label{sec:Proofs}

We start by proving the following result which was mentioned in the
introduction 
\begin{lem}
Suppose that the initial condition $u_0(x)$ is such that \eqref{cond} holds and
fix $\alpha \in (0,1)$. Then, for $t$ large enough, $\mu_t\uppar\alpha$ is
the unique solution of $u(x,t)=\alpha$ and furthermore $ t\mapsto
\mu_t\uppar\alpha$ is differentiable.
\end{lem}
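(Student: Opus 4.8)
The plan is to combine the uniform convergence \eqref{unif} with parabolic regularity so as to control both the profile and its slope near the level $\alpha$, and then to conclude by the implicit function theorem. Let $m_t$ denote a centring term for which \eqref{unif} holds; such a term exists by Bramson's result since \eqref{cond} is assumed. Recall the standard fact that the critical wave of \eqref{omega} is strictly decreasing, $\omega'<0$ on all of $\R$, so that $W\uppar\alpha$ is the unique solution of $\omega(W\uppar\alpha)=\alpha$. First I would localise the level set: for any small $\delta>0$ strict monotonicity gives $\omega(W\uppar\alpha-\delta)>\alpha>\omega(W\uppar\alpha+\delta)$, and since the convergence in \eqref{unif} is uniform in $x$, choosing $t$ large enough that $\sup_x|u(m_t+x,t)-\omega(x)|<\min\{\omega(W\uppar\alpha-\delta)-\alpha,\ \alpha-\omega(W\uppar\alpha+\delta)\}$ forces $u(m_t+x,t)>\alpha$ for $x\le W\uppar\alpha-\delta$ and $u(m_t+x,t)<\alpha$ for $x\ge W\uppar\alpha+\delta$. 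Hence every solution of $u(x,t)=\alpha$ lies in the window $m_t+(W\uppar\alpha-\delta,W\uppar\alpha+\delta)$, and by continuity of $x\mapsto u(x,t)$ and the intermediate value theorem at least one solution exists.

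The crux is to promote the $C^0$ convergence \eqref{unif} to a convergence of slopes. Because $0\le u\le 1$ and the reaction term $u-u^2$ in \eqref{FKPP} is smooth, interior parabolic (Schauder) estimates give that, for any fixed $t_0>0$, the derivatives $\partial_x u$ and $\partial_x^2 u$ are bounded uniformly on $[t_0,\infty)\times\R$; in particular $\partial_x u(m_t+\cdot,t)$ is equicontinuous. Combined with the uniform convergence of $u(m_t+\cdot,t)$ to $\omega$, an Arzel\`a--Ascoli argument then forces $\partial_x u(m_t+x,t)\to\omega'(x)$ uniformly on compact $x$-intervals, since any subsequential $C^1_{\mathrm{loc}}$ limit must have $C^0$ part $\omega$ and hence derivative $\omega'$. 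Shrinking $\delta$ so that $\omega'\le -c<0$ on $[W\uppar\alpha-\delta,W\uppar\alpha+\delta]$, we obtain $\partial_x u(m_t+x,t)<-c/2<0$ on this window for $t$ large, so that $x\mapsto u(x,t)$ is strictly decreasing there. Together with the localisation of the previous paragraph, this yields a unique solution $\mu_t\uppar\alpha$ of $u(x,t)=\alpha$.

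For the differentiability I would apply the implicit function theorem to $F(x,t)=u(x,t)-\alpha$. The solution $u$ is $C^1$ in $(x,t)$ for $t>0$ (again by parabolic regularity, $\partial_t u=\partial_x^2 u+u-u^2$ being then continuous), and $\partial_x F(\mu_t\uppar\alpha,t)=\partial_x u(\mu_t\uppar\alpha,t)\ne0$ by the preceding step. Hence $t\mapsto\mu_t\uppar\alpha$ is $C^1$, with $\dot\mu_t\uppar\alpha=-\partial_t u(\mu_t\uppar\alpha,t)/\partial_x u(\mu_t\uppar\alpha,t)$; the uniqueness just established guarantees that the local branches produced by the implicit function theorem glue into a single function for all large $t$.

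I expect the only genuine obstacle to be the upgrade from $C^0$ to $C^1$ convergence of the front profile, i.e.\ the parabolic-regularity step and the accompanying equicontinuity argument; the existence, localisation, uniqueness, and differentiability are then routine consequences of the strict monotonicity of $\omega$ and of the implicit function theorem.
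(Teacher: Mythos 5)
Your proof is correct and follows essentially the same route as the paper: localisation of the level set via the uniform convergence \eqref{unif}, strict negativity of $\partial_x u$ near the level (coming from convergence of slopes to $\omega'$, which is negative and bounded away from zero there) to get uniqueness, and the implicit function theorem for differentiability. The only difference is that the paper simply cites a standard result \cite[Theorem~9.1]{uchiyama} for the locally uniform convergence $\partial_x u(m_t+x,t)\to\omega'(x)$, whereas you derive it yourself from interior parabolic estimates and an Arzel\`a--Ascoli argument, which is a valid self-contained substitute for that citation.
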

\begin{proof}
Recall \eqref{unif}: there exists $m_t$ such that,
\begin{equation}
u(m_t+x,t)\to\omega(x)\quad\text{uniformly in $x$}.
\label{unif2}
\end{equation}
A standard result (see for instance \cite[Theorem~9.1]{uchiyama}) gives
then that
\begin{equation}
\partial_x u(m_t+x,t) \to\omega'(x)\quad\text{locally uniformly in $x$}.
\label{unif3}
\end{equation}
For any $t>0$ the function $x\mapsto u(x,t)$ is continuous and interpolates
between 1 and 0 so for each $t$ there exists at least one $x$ such that
$u(m_t+x,t)=\alpha$.
For each $\epsilon>0$, if
time is large enough, then $u(m_t+x,t)=\alpha$ implies that
$|x-W\uppar\alpha|\le\epsilon$
because of the uniform convergence \eqref{unif2}. As $\omega'(x)$ is negative and bounded
away from 0 on $[W\uppar\alpha-\epsilon,W\uppar\alpha+\epsilon]$ then
$\partial_xu(m_t+x,t)$ is negative on the same
interval for $t$ large enough because of \eqref{unif3}. This implies that for $t$ large enough
there exists a unique $x$ such that $u(m_t+x,t)=\alpha$ or, equivalently,
a unique $\mu_t\uppar\alpha$ such that $u(\mu_t\uppar\alpha,t)=\alpha$.
The differentiability of $t\mapsto \mu_t\uppar\alpha$
is then a consequence of the implicit function Theorem.
\end{proof}

We now turn to the proofs of the Theorems.
Pick $\alpha\in(0,1)$ and an initial condition $u_0(x)$ satisfying \eqref{cond}.
Introduce
\begin{equation}
\tilde\omega(x)=\omega(W\uppar\alpha+x).
\end{equation}
When $t$ is sufficiently large so that
$t\mapsto\mu_t\uppar\alpha$ is a well-defined $\mathcal C^1$ function, introduce
also
\begin{equation}
	\delta(x,t) = u(\mu_t\uppar\alpha+x,t)-\tilde\omega(x).
\label{defdelta}
\end{equation}
Of course,
\begin{equation}
|\delta(x,t)|\le1,\qquad
\delta(0,t)=0,\qquad
\delta(x,t)\xrightarrow[t\to\infty]{}0,\qquad\text{uniformly in $x$.}
\end{equation}
From \eqref{defdelta},
\begin{align}
\partial_t\delta &= \partial_x^2(\delta +\tilde\omega) + \dot
\mu\uppar\alpha_t
\partial_x(\delta+\tilde\omega)+(\delta+\tilde\omega)
-(\delta+\tilde\omega)^2,\notag
\\&=\partial_x^2\delta +\dot \mu\uppar\alpha_t
\partial_x\delta+(1-2\tilde\omega)\delta-\delta^2-(2-\dot
\mu\uppar\alpha_t)\tilde\omega',\notag
\\&
=\partial_x^2\delta+2\partial_x\delta-(2\tilde\omega-1+\delta)\delta
-\eta_t\tilde\omega' -\eta_t\partial_x\delta,
\label{maindelta}
\end{align}
where we used \eqref{omega} to simplify the $\tilde\omega$ and where we recall \eqref{defeta}:
\begin{equation}
\eta_t=2-\dot \mu\uppar\alpha_t.
\label{defeta2}
\end{equation}
Define $r$ by
\begin{equation}
r(x,t)=e^x\delta(x,t).
\end{equation}
One finds that $r$ satisfies for all $x\in\R$ 
\begin{equation}\label{mainr}
\partial_t  r = \partial_x^2  r -(2\tilde\omega+\delta) r
+ (r-\tilde\omega'e^x)\eta_t- \eta_t \partial_x  r, \qquad
 r(0,t)= 0.
\end{equation}
The condition $r(0,t)=0$ effectively decouples the domains $x\le0$ and
$x\ge0$. We can therefore consider~\eqref{mainr} for $x\le0$ only.
A key step in our proofs is the following:
\begin{prop}
With $u_0(x)$ satisfying~\eqref{cond},
there exists two positive constants $c$ and $t_0$
such that
\begin{equation}
\max_{x\le0} \big|r(x,t)\big| \le e^{-\alpha t}\bigg(e^{\alpha t_0}+c\int_{t_0}^t\diffd
u |\eta_u| e^{\alpha u}\bigg)\quad\text{for all $t\ge t_0$}.
\label{eqprop}
\end{equation}
Furthermore, if $\alpha>1/2$, there exists two other positive constants $c$
and $t_0$ such that
\begin{equation}
\max_{x\le0} \big|\delta(x,t)\big| \le e^{-(\alpha-\frac12) t}\bigg(e^{
(\alpha-\frac12) t_0}+c\int_{t_0}^t\diffd
u |\eta_u| e^{(\alpha-\frac12) u}\bigg)\quad\text{for all $t\ge t_0$}.
\label{eqprop2}
\end{equation}
\label{prop}
\end{prop}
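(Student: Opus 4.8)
The plan is to treat both bounds by a parabolic comparison (maximum) principle on the half-line $x\le 0$, exploiting the single structural fact that $\tilde\omega\ge\alpha$ there. Since $\omega$ is decreasing with $\omega(W\uppar\alpha)=\alpha$, for $x\le0$ one has $\tilde\omega(x)=\omega(W\uppar\alpha+x)\ge\alpha$, so the coefficient $2\tilde\omega+\delta-\eta_t$ multiplying $r$ in \eqref{mainr} is bounded below by $2\alpha-|\delta|-|\eta_t|$, which is $\ge\alpha$ once $t\ge t_0$, with $t_0$ chosen so that $\|\delta(\cdot,t)\|_\infty+|\eta_t|\le\alpha$ (possible by \eqref{eta0} and the uniform convergence $\delta\to0$). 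The terms carrying $\eta_t$ in \eqref{mainr} are then viewed as a small forcing proportional to $\eta_t$ together with a harmless first-order drift $-\eta_t\partial_x r$.

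For \eqref{eqprop} I would set $v(x,t)=e^{\alpha t}r(x,t)$, which solves $\partial_t v=\partial_x^2 v+b\,v-\eta_t\partial_x v+S$ with $b=\alpha-(2\tilde\omega+\delta-\eta_t)\le0$ for $t\ge t_0$ and source $S=-\tilde\omega'e^x\eta_t e^{\alpha t}$ obeying $|S|\le c\,|\eta_t|e^{\alpha t}$, where $c=\sup_{x\le0}|\tilde\omega'(x)|e^x\le\|\omega'\|_\infty<\infty$. I would compare $v$ against the spatially constant barrier $N(t)=e^{\alpha t_0}+c\int_{t_0}^t\diffd u\,|\eta_u|e^{\alpha u}$: the function $w=N-v$ (and symmetrically $N+v$) is a supersolution because the required inequality reduces to $\dot N-bN\ge |S|$, which holds since $\dot N=c|\eta_t|e^{\alpha t}\ge|S|$ and $-bN\ge0$. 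On the parabolic boundary $w\ge0$ because $v(0,t)=e^{\alpha t}r(0,t)=0$, because $|v(x,t_0)|\le e^{\alpha t_0}$ (as $|r|\le1$), and because $|v(x,t)|\le e^{\alpha t}e^x\to0$ as $x\to-\infty$. The maximum principle then gives $|v|\le N$, i.e. \eqref{eqprop}; note that the $x\to-\infty$ end is free here thanks to the weight $e^x$.

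For \eqref{eqprop2} with $\alpha>\tfrac12$ I would run the same scheme directly on $\delta$ via \eqref{maindelta}, where the relevant potential is now $2\tilde\omega-1\ge2\alpha-1=2(\alpha-\tfrac12)>0$. Setting $\tilde v=e^{(\alpha-\frac12)t}\delta$ makes the zeroth-order coefficient $(\alpha-\tfrac12)-(2\tilde\omega-1+\delta)\le0$ for $t\ge t_0$; the drift $2\partial_x\delta$ is again harmless against a spatially constant barrier (it vanishes at an interior extremum of $w$), and the forcing is bounded by $c|\eta_t|e^{(\alpha-\frac12)t}$ with $c=\sup_{x\le0}|\tilde\omega'|\le\|\omega'\|_\infty$. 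Comparison against $N(t)=e^{(\alpha-\frac12)t_0}+c\int_{t_0}^t\diffd u\,|\eta_u|e^{(\alpha-\frac12)u}$ then yields \eqref{eqprop2}, provided the comparison can be closed at the left end, which is the delicate point discussed next.

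The one genuinely delicate point, and the main obstacle, is the behaviour as $x\to-\infty$ in this second part: unlike $r=e^x\delta$, which vanishes automatically at $-\infty$, the unweighted $\delta$ must be shown to stay below the barrier far behind the front. I would handle this by first proving that $\delta(x,t)\to0$ as $x\to-\infty$ uniformly for $t$ in compact intervals, which lets one run the comparison on a truncated domain $[-L,0]$ and send $L\to\infty$ (the barrier $N(t)$ is bounded below by $e^{(\alpha-\frac12)t_0}>0$, while the left-boundary term tends to $0$). This uniform decay follows from writing $\delta=(u-1)+(1-\tilde\omega)$: the term $1-\tilde\omega$ decays exponentially as $x\to-\infty$ by linearising \eqref{omega} about $\omega=1$, while $p:=1-u\ge0$ solves $\partial_t p=\partial_x^2 p-p(1-p)$, so a standard parabolic comparison controls $p$ uniformly behind the front starting from the decay of $u_0$ in \eqref{cond}. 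The hypothesis $\alpha>\tfrac12$ is precisely what keeps the $\delta$-potential $2\tilde\omega-1$ positive, so that the comparison produces decay rather than growth. The remaining verifications — smoothness of the coefficients (parabolic regularity for $t>0$ together with the $\mathcal C^1$ regularity of $\mu_t\uppar\alpha$ from the Lemma) and the applicability of the maximum principle in the presence of the first-order terms — are routine.
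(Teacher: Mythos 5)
Your proof is correct and is essentially the paper's own argument: both rest on the comparison principle for $x\le0$ with a spatially constant barrier, using $\tilde\omega\ge\alpha$ to bound the zeroth-order coefficient below by $\alpha$ (resp.\ $\alpha-\tfrac12$ when working directly on $\delta$) and the boundedness of $\tilde\omega'e^x$ (resp.\ $\tilde\omega'$) to bound the forcing by $c|\eta_t|$ --- your weighted function $v=e^{\alpha t}r$ compared against $N(t)=e^{\alpha t_0}+c\int_{t_0}^t\diffd u\,|\eta_u|e^{\alpha u}$ is exactly the paper's two-step comparison $r\le\hat r\le\overline r$ with the $x$-independent barrier's ODE $\partial_t\overline r=-\alpha\overline r+c|\eta_t|$ integrated explicitly. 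The one point where you go beyond the paper is the closing of the maximum principle at $x=-\infty$ for the unweighted quantity $\delta$: the paper applies the comparison principle on the unbounded half-line without comment (which is legitimate for bounded solutions of uniformly parabolic equations with bounded coefficients, by the Phragm\'en--Lindel\"of form of the maximum principle), so your truncation-plus-decay argument is a correct, though not strictly necessary, way of filling in that step.
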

The right-hand-sides in \eqref{eqprop} and \eqref{eqprop2} can then be estimated with the
following Lemma:
\begin{lem}
\label{lem}
For $\beta>0$ and $t_0$ two real numbers, and $t\mapsto\phi_t$ a function,
define $R_t$ by
\begin{equation}
R_t= e^{-\beta t}\int_{t_0}^t\diffd u\,\varphi_ue^{\beta
u} .
\label{lemR=}
\end{equation}
For large time,
\begin{itemize}
\item If $\varphi_t\to0$, then $R_t\to 0$,
\item If $\varphi_t=\mathcal O(t^{-\gamma})$ for some $\gamma>0$,
then $R_t=\mathcal O(t^{-\gamma})$,
\item If $\int_t^\infty \varphi_u\,\diffd u= \mathcal O(t^{-\gamma})$ for some
$\gamma>0$,
then $R_t=\mathcal O(t^{-\gamma})$.
\end{itemize}
\end{lem}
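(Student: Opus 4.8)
The plan is to establish the three bullet points in order, deriving the third from the second by an integration by parts. Throughout I write $R_t$ for the quantity in \eqref{lemR=}, and I freely split the range of integration and bound each piece separately.

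For the first item, suppose $\varphi_t\to0$. Fix $\epsilon>0$ and pick $T\ge t_0$ so that $|\varphi_u|\le\epsilon$ for all $u\ge T$. Splitting \eqref{lemR=} at $T$, the contribution of $[t_0,T]$ equals $e^{-\beta t}$ times a quantity independent of $t$, hence tends to $0$; the contribution of $[T,t]$ is at most $\epsilon\,e^{-\beta t}\int_T^t e^{\beta u}\,\diffd u\le\epsilon/\beta$. Therefore $\limsup_{t\to\infty}|R_t|\le\epsilon/\beta$, and letting $\epsilon\to0$ gives $R_t\to0$.

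For the second item, suppose $|\varphi_u|\le M u^{-\gamma}$ for $u\ge t_1$ (some $t_1\ge t_0$ and $M>0$). The part of the integral over $[t_0,t_1]$ contributes $\mathcal O(e^{-\beta t})$, which is negligible, so it suffices to estimate $e^{-\beta t}\int_{t_1}^t u^{-\gamma}e^{\beta u}\,\diffd u$. I split this range at $t/2$ (for $t\ge2t_1$). On $[t_1,t/2]$ one bounds $u^{-\gamma}\le t_1^{-\gamma}$ and $\int e^{\beta u}\le e^{\beta t/2}/\beta$, so after the factor $e^{-\beta t}$ this piece is $\mathcal O(e^{-\beta t/2})$, again negligible. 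On $[t/2,t]$ one has $u^{-\gamma}\le(t/2)^{-\gamma}=2^{\gamma}t^{-\gamma}$, whence this piece is at most $2^{\gamma}t^{-\gamma}e^{-\beta t}\int_{t/2}^t e^{\beta u}\,\diffd u\le(2^{\gamma}/\beta)\,t^{-\gamma}$. Collecting the pieces, $R_t=\mathcal O(t^{-\gamma})$. (The same conclusion follows from a single integration by parts, writing $\int u^{-\gamma}e^{\beta u}=u^{-\gamma}e^{\beta u}/\beta+(\gamma/\beta)\int u^{-\gamma-1}e^{\beta u}$ and absorbing the last integral for $t_1$ large, but the split at $t/2$ is cleaner.)

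For the third item, set $\Psi_t=\int_t^\infty\varphi_u\,\diffd u$, which is well defined and $\mathcal O(t^{-\gamma})$ by hypothesis, with $\Psi_t'=-\varphi_t$. Integrating by parts gives $\int_{t_0}^t\varphi_u e^{\beta u}\,\diffd u=-\Psi_t e^{\beta t}+\Psi_{t_0}e^{\beta t_0}+\beta\int_{t_0}^t\Psi_u e^{\beta u}\,\diffd u$, so that $R_t=-\Psi_t+\Psi_{t_0}e^{-\beta(t-t_0)}+\beta\,e^{-\beta t}\int_{t_0}^t\Psi_u e^{\beta u}\,\diffd u$. The first term is $\mathcal O(t^{-\gamma})$ directly, the second is exponentially small, and the third is $\mathcal O(t^{-\gamma})$ by applying the second item with $\Psi$ in place of $\varphi$ (legitimate since $\Psi_u=\mathcal O(u^{-\gamma})$). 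Hence $R_t=\mathcal O(t^{-\gamma})$. The computations are elementary; the only point requiring a little care is this third item, where one must first convert the tail hypothesis on $\int_t^\infty\varphi_u\,\diffd u$ into pointwise control of the antiderivative $\Psi_t$, and then recognise that the resulting exponential average of $\Psi$ is exactly of the form already handled. One should also verify that the integration by parts is justified, for which it suffices that $\varphi$ be locally integrable so that $\Psi$ is absolutely continuous with $\Psi'=-\varphi$ almost everywhere; this is automatic in our application, where $\varphi$ is a continuous function of time.
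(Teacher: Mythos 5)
Your proof is correct, and the first and third bullet points follow essentially the same route as the paper (split the integral where $|\varphi|$ becomes small; integrate by parts against the tail $\Psi_t=\int_t^\infty\varphi_u\,\diffd u$ and feed the result back into the second bullet). Where you genuinely diverge is the second bullet. You split the integration range at $t/2$, bounding the contribution of $[t_1,t/2]$ by $\mathcal O(e^{-\beta t/2})$ and that of $[t/2,t]$ by $(2^{\gamma}/\beta)\,t^{-\gamma}$ using the near-constancy of $u^{-\gamma}$ on $[t/2,t]$; this is elementary, self-contained, and gives an explicit constant. The paper instead proves a more general statement: if $\tilde\varphi_t>0$ is such that $\ln\tilde\varphi_t$ is convex and $\liminf_{t\to\infty}(\ln\tilde\varphi_t)/t>-\beta$, then $e^{-\beta t}\int_{t_0}^t\tilde\varphi_u e^{\beta u}\,\diffd u=\mathcal O(\tilde\varphi_t)$, the convexity giving a linear upper bound on $\ln\tilde\varphi_u$ whose exponential integrates explicitly; the case $\tilde\varphi_t=t^{-\gamma}$ is then a special instance. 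Your dyadic split relies on $\sup_{u\in[t/2,t]}\varphi_u=\mathcal O(\varphi_t)$, which is fine for power laws but would fail for, say, $\tilde\varphi_t=e^{-ct}$ with $0<c<\beta$, a case the paper's log-convexity argument covers at no extra cost; conversely, your argument avoids any positivity or convexity bookkeeping and is arguably the cleaner path to the lemma exactly as stated, since only power-law rates are ever invoked in the applications. Your closing remark on justifying the integration by parts (local integrability of $\varphi$, absolute continuity of $\Psi$) is a point the paper passes over silently, and is a welcome addition.
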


With Proposition~\ref{prop} and Lemma~\ref{lem}, the first part of
Theorem~\ref{thm1} is trivial: assuming that $\eta_t=\mathcal
O(t^{-\gamma})$ with $\gamma>0$, then 
\begin{equation}
\begin{cases}
\displaystyle
\max_{x\in[-x_0,0]}|\delta(x,t)|\le
e^{x_0}\max_{x\le0}|r(x,t)|=\mathcal
O(t^{-\gamma}),
\\[2ex]\displaystyle
\max_{x\le0}|\delta(x,t)|
=\mathcal O(t^{-\gamma}),&\text{if $\alpha>\dfrac12$}.
\end{cases}
\label{concthm1thm3}
\end{equation}
which is \eqref{thm1_1} of Theorem~\ref{thm1}.

The first part of Theorem~\ref{thm3} is also very
easy. Assuming \eqref{propmualpha}, then one has, for some
$\gamma\in(\frac12,1]$,
\begin{equation}
\eta_t=\frac3{2t}-\frac g{2t^{3/2}}+\psi_t\qquad\text{with
}\int_t^\infty\diffd u\,\psi_u=\mathcal O(t^{-\gamma}).
\label{thm3etat}
\end{equation}
Because we assume that $\eta_t$ does not change sign for $t$ large
enough, one can push the absolute values around $\eta_t$ in
\eqref{eqprop} and \eqref{eqprop2} outside the integral. Then, applying
Lemma~\ref{lem} to each of
the three terms composing $\eta_t$ in \eqref{thm3etat}, one reaches again
the conclusion \eqref{concthm1thm3} which is \eqref{thm3_1} in
Theorem~\ref{thm3}.

The second parts of Theorems~\ref{thm1} and~\ref{thm3} are then direct
consequences of their first parts.
Pick $\beta\in(\alpha,1)$. By definition,
\begin{equation}
\beta=u\big(\mu\uppar\alpha_t+(\mu_t\uppar\beta-\mu_t\uppar\alpha),t\big)
=\omega\big(W\uppar\alpha
+\mu_t\uppar\beta-\mu_t\uppar\alpha\big)
+\delta\big(\mu_t\uppar\beta-\mu\uppar\alpha_t,t\big).
\label{b=}
\end{equation}
We know that $\mu_t\uppar\beta-\mu_t\uppar\alpha$ converges to
$W\uppar\beta-W\uppar\alpha<0$, so it must remain inside $[-x_0,0]$ for $t$
large enough and a well chosen $x_0$. Then, the term $\delta(\cdot,t)$ in
\eqref{b=} is
a $\mathcal O(t^{-\gamma})$ and because $\omega$ is differentiable 
with non-zero derivative, one must have
\begin{equation}
\mu\uppar\beta_t-\mu\uppar\alpha_t = W\uppar\beta-W\uppar\alpha + \mathcal
O(t^{-\gamma}),
\label{thm1_2bis}
\end{equation}
which is the second part \eqref{thm1_2} of Theorem~\ref{thm1}.
The conclusion \eqref{thm1_2bis} also holds for Theorem~\ref{thm3};
combined with its hypothesis~\eqref{propmualpha}, it
gives the second part~\eqref{thm3_2} of Theorem~\ref{thm3}.

Therefore, it only remains to prove Proposition~\ref{prop} 
and Lemma~\ref{lem}
 to complete the proofs of Theorems~\ref{thm1}
and~\ref{thm3}.

\begin{proof}[Proof of Proposition~\ref{prop}]
Recall equation~\eqref{mainr} followed by $r$,
\begin{equation}\label{mainr2}
\partial_t  r = \partial_x^2  r -(2\tilde\omega+\delta)r 
+ (r-\tilde\omega'e^x)\eta_t- \eta_t \partial_x  r, \qquad
 r(0,t)= 0.
\end{equation}
We only consider the side $x\le0$.
Since $r(x,t)=e^x \delta(x,t)$ we have
\begin{equation}
|r(x,t)|\le 1\qquad\text{for all $t$ and all $x\le0$}.
\label{c1}
\end{equation}
Furthermore, $-\tilde\omega'(x)e^x>0$ is bounded for $x\le0$
so there exists a $c$ such that
\begin{equation}
|r(x,t) -\tilde\omega'(x)e^x | \le c\qquad\text{for all $t$ and all $x\le0$}.
\label{c2}
\end{equation}
Also, there exists a $t_0$ such that
\begin{equation}
2\tilde\omega(x)+\delta(x,t) > \alpha
\label{c3}
\qquad\text{for $t\ge t_0$ and $x\le0$.}
\end{equation}
Indeed, $\tilde\omega(x)\ge\alpha$ for $x\le0$, and
$\delta(x,t)$ converges uniformly to 0.

With these ingredients we are ready to apply the comparison principle. It
goes in two steps; first, because of \eqref{c1} and \eqref{c2} one has
for all $x\le 0$ and all $t\ge t_0$
\begin{equation}
r(x,t)\le \hat r(x,t)\quad\text{where}\quad
\partial_t\hat r =\partial_x^2\hat r -(2\tilde\omega+\delta)
\hat r +c |\eta_t| -\eta_t\partial_x\hat r,\quad
\hat r(x,t_0)=1,\quad \hat r(0,t)=0.
\end{equation}
Clearly, $\hat r$ cannot become negative. Then,
one gets with
\eqref{c3} that for any non-negative function
$b_t$
\begin{equation}
\hat r(x,t)\le \overline r(x,t)\quad\text{where}\quad
\partial_t\overline r =\partial_x^2\overline r -\alpha
\overline r +c |\eta_t| -\eta_t\partial_x\overline r,\quad
\overline r(x,t_0)=1,\quad\overline r(0,t)=b_t.
\end{equation}
We choose $b_t$ so that $\overline r$ remains $x$ independent, which
leads to $\partial_t\overline r = -\alpha\overline r+c|\eta_t|$ or
\begin{equation}
\overline r(\cdot,t) = e^{-\alpha t}\bigg(e^{\alpha t_0}+c\int_{t_0}^t\diffd
u\,|\eta_u|e^{\alpha u}\bigg).
\end{equation}
Similarly, one shows that $-r(x,t)\le\hat r(x,t)\le \overline r(\cdot,t)$,
which concludes the proof.

Finally, we prove the second part of Proposition~\ref{prop} in exactly the
same way than the first part, but starting from~\eqref{maindelta} instead of
\eqref{mainr}. As above, one first shows that $\delta\le\hat\delta$ where
$\hat\delta$ follows the same equation as $\delta$ but with the
$-\tilde\omega'\eta_t$ replaced by $c|\eta_t|$.
Then, $\hat\delta\le\overline\delta$ where we
replace
$-(2\tilde\omega-1+\delta)\delta$ by $-(\alpha-\frac12)\delta$. Indeed, for
all $x<0$, one has $2\tilde\omega(x)-1\ge2\alpha-1$ and for $t$ large
enough $|\delta|\le\alpha-\frac12$.
\end{proof}


\begin{proof}[Proof of Lemma~\ref{lem}]\ 
\begin{itemize}
\item
The first bullet point is easy. Assume $\varphi_t\to0$. For any $\epsilon>0$
pick $t_1>t_0$ such that $|\varphi_t|<\epsilon$ for $t\ge t_1$. Then
\begin{equation}
|R_t| \le e^{-\beta t} \bigg(\int_{t_0}^{t_1}\diffd
u\,|\varphi_u|e^{\beta u}\bigg) + \epsilon\, e^{-\beta t} \int_{t_1}^t\diffd
u\,e^{\beta u}\le \frac{2\epsilon}{\beta}\quad\text{for $t$ large
enough.}
\end{equation}
\item
For the second bullet point we prove a slightly more general result.
Let $t\mapsto\tilde\varphi_t$ be a function such that
\begin{equation}
\tilde\varphi_t>0,\qquad
\ln\tilde\varphi_t\text{ is convex for $t>t_0$},\qquad
\liminf_{t\to\infty}\frac{\ln\tilde\varphi_t}t>-\beta.
\label{condphi}
\end{equation}
By convexity, for $t>t_0$ and $u\in[t_0,t]$, one has
\begin{equation}
\ln\tilde\varphi_u\le
\frac{\ln\tilde\varphi_t-\ln\tilde\varphi_{t_0}}{t-t_0}(u-t_0)
+\ln\tilde\varphi_{t_0}
\quad\text{and then }
\tilde\varphi_ue^{\beta u} \le \tilde\varphi_{t_0}e^{\beta
t_0+\big[\beta+\frac{\ln\tilde\varphi_t-\ln\tilde\varphi_{t_0}}
{t-t_0}\big](u-t_0)}.
\end{equation}
Because of the last hypothesis on $\tilde\varphi$ in \eqref{condphi}, there
exists a $c>0$ such that the term in square brackets in the equation above
is larger than $c$ for $t$ large enough. Then, for $t$ large enough,
\begin{equation}
0\le
\int_{t_0}^t\diffd u \,\tilde\varphi_ue^{\beta u}
\le\frac{\tilde\varphi_{t}e^{\beta t}-\tilde\varphi_{t_0}e^{\beta t_0}}c
\qquad\text{and then}\quad
e^{-\beta t}\int_{t_0}^t\diffd u\,\tilde\varphi_ue^{\beta u} 
=\mathcal O(\tilde\varphi_t).
\end{equation}

If one assumes now that $\varphi_t=\mathcal O(\tilde\varphi_t)$ where
$\tilde\varphi_t$ satisfies~\eqref{condphi}, then we conclude that
$R_t=\mathcal O(\tilde\varphi_t)$. As the functions
$\tilde\varphi_t=t^{-\gamma}$ with $\gamma>0$ satisfy these conditions, we
have proved the second bullet point.
\item We finally turn to the third bullet point. Let
$\Phi_t=\int_t^\infty\diffd u\,\varphi_u$. By integration by parts
\begin{equation}
R_t= \Phi_{t_0}e^{-\beta (t-t_0)}-\Phi_t
+\beta e^{-\beta t} \int_{t_0}^t\diffd u\, \Phi_u e^{\beta u}
\end{equation}
If one assumes that $\Phi_t=\mathcal O(t^{-\gamma})$ for some $\gamma>0$,
then an application of the second bullet point gives the third bullet
point.
\end{itemize}
\end{proof}

We now turn to the proof of Theorem~\ref{thm2}.

\begin{proof}[Proof of Theorem~\ref{thm2}]
Write
\begin{equation}
r(x,t)=\eta_t\big[\Psi(x) + s(x,t)\big].
\label{defs}
\end{equation}
Then, by substituting into \eqref{mainr} and after division by $\eta_t$,
\begin{equation}
\frac{\dot\eta_t}{\eta_t}\big[\Psi+s\big]
+ \partial_t s = \Psi''+\partial_x^2s
-(2\tilde\omega + \delta-\eta_t)\big[\Psi+s\big]
-\tilde\omega'e^x-\eta_t\big[\Psi'+\partial_x s\big]
.
\label{eq58}
\end{equation}
We choose for $\Psi$ the unique solution to
\begin{equation}
\Psi''-2\tilde\omega\Psi=\tilde\omega'e^x,
\qquad\Psi(0)=0,\qquad\Psi(x)\text{ is bounded for $x<0$}.
\label{defPsi}
\end{equation}
Before going further, let us check that the solution to \eqref{defPsi}
exists and is unique. First notice that
\begin{equation}
(\tilde\omega' e^x)'' - 2\tilde\omega(\tilde\omega' e^x) = 0.
\label{eq60}
\end{equation}
This leads to look for a solution $\Psi$ of the form
\begin{equation}
\Psi(x)=\tilde\omega'(x)e^x\lambda(x).
\end{equation}
One obtains
\begin{equation}
(\tilde\omega'e^x)\lambda'' + 2
(\tilde\omega'e^x)'\lambda'=\tilde\omega'e^x
\quad\text{which is the same as}\quad
\frac{\diffd}{\diffd x}\Big[(\tilde\omega'e^x)^2\lambda'\Big]=
(\tilde\omega'e^x)^2.
\end{equation}
One sees from \eqref{eq60} that $\tilde\omega'(x)e^x\sim C e^{\sqrt2 x}$ 
as $x\to-\infty$  for some constant $C$.
Then
\begin{equation}
\lambda'(x)=\frac1{(\tilde\omega'e^x)^2}\left[A+\int_{-\infty}^x
\kern-.5em\diffd
z\,\tilde\omega'(z)^2e^{2z}\right]
\quad\text{and}\
\lambda(x)= B + \int_0^x\frac{\diffd y}{\tilde\omega'(y)^2e^{2y}}
\left[A+\int_{-\infty}^y\kern-.5em\diffd
z\,\tilde\omega'(z)^2e^{2z}\right].
\end{equation}
We take $B=0$ because we want $\Psi(0)=0$. Then one checks easily that one
must choose $A=0$ because otherwise $\Psi$ diverges at $-\infty$. Finally,
the only possible solution to \eqref{defPsi} is
\begin{align}
\Psi(x) &= \tilde\omega'(x)e^x
\int_0^x\frac{\diffd y}{\tilde\omega'(y)^2e^{2y}}
\int_{-\infty}^y\kern-.5em\diffd
z\,\tilde\omega'(z)^2e^{2z},
\\
&=\omega'(W\uppar\alpha+x)e^x\int_{W\uppar\alpha}^{W\uppar\alpha+x}
\frac{\diffd y}{\omega'(y)^2e^{2y}}
\int_{-\infty}^y\kern-.5em\diffd
z\,\omega'(z)^2e^{2z}
&&\text{(recall $\tilde\omega(x)=\omega(W\uppar\alpha+x)$)},
\\&= e^x\left[
\Phi(W\uppar\alpha +x)
-\frac{\Phi(W\uppar\alpha)}{\omega'(W\uppar\alpha)}
\omega'(W\uppar\alpha+x)
\right].
\label{PsiPhi}
\end{align}
with $\Phi$ the function \eqref{defPhi} defined in the Theorem.

We go back to \eqref{eq58}. Using \eqref{defPsi}, one gets
\begin{equation}
\partial_t s =\partial_x^2s -\Big(2\tilde\omega +\delta -\eta_t
+\frac{\dot\eta_t}{\eta_t}\Big)s -\eta_t\partial_x s
-\left[\frac{\dot\eta_t}{\eta_t}\Psi
+(\delta-\eta_t)\Psi +\eta_t\Psi'\right].
\label{eqs}
\end{equation}
$\Psi$ and $\Psi'$ are bounded for $x\le0$. For large time, $\delta(x,t)$
goes uniformly to zero. $\eta_t$ is known \cite{KPP,uchiyama}
to go to zero and, by hypothesis,
$\dot\eta_t/\eta_t$ also goes
to 0.  We conclude that there exists a positive 
function $t\mapsto\epsilon_t$ which
vanishes as $t\to\infty$ and such
that the term in square brackets in \eqref{eqs} lies for all $x\le0$ in
the interval $[-\epsilon_t,\epsilon_t]$.

The proof then goes as in Theorems~~\ref{thm1} and~\ref{thm3} by using in
two steps the comparison principle. For any $t_0$, for all $x\le0$ and all
$t\ge t_0$, one has
\begin{equation}
s(x,t)\le\hat s(x,t)\quad\text{where}\
\partial_t\hat s = \partial_x^2\hat s -
\Big(2\tilde\omega +\delta -\eta_t
+\frac{\dot\eta_t}{\eta_t}\Big)\hat s -\eta_t\partial_x \hat s +\epsilon_t,
\quad
\hat s(x,t_0)=c,\quad\hat s(0,t)=0,
\end{equation}
where $c$ is chosen such that $ s(x,t_0)\le c$ for all $x\le0$.
It is clear that $\hat s$ cannot become negative. Notice now, as before,
that the big parenthesis in the equation above is larger than $\alpha$ for
$t\ge t_0$ if $t_0$ is chosen large enough, uniformly in $x\le0$. 
Then, for any non-negative function $b_t$,
\begin{equation}
\hat s(x,t)\le\overline s(x,t)\quad\text{where}\
\partial_t\overline s = \partial_x^2\overline s -
\alpha
\overline s -\eta_t\partial_x \overline s +\epsilon_t,
\quad
\overline s(x,t_0)=c,\quad\overline s(0,t)=b_t,
\end{equation}
Choosing $b_t$ such that $\overline s$ is independent of $x$ and solving,
one obtains
\begin{equation}
s(x,t)\le c e^{-\alpha t}\bigg(e^{\alpha t_0}+\int_{t_0}^t\diffd
u\,\epsilon_u e^{\alpha u}\bigg).
\end{equation}
From Lemma~\ref{lem}, the right hand side goes to zero. One bounds $s(x,t)$
from below by a vanishing quantity in exactly the same way, therefore
\begin{equation}
\max_{x\le0} \big|s(x,t)\big|\xrightarrow[t\to\infty]{}0.
\end{equation}
Recalling that
\begin{equation}
u\big(\mu_t\uppar\alpha+x,t)-\omega(W\uppar\alpha+x)
=\delta(x,t)=e^{-x} r(x,t) = \eta_t\big[e^{-x}\Psi(x)+e^{-x}s(x,t)\big]
\end{equation}
and recalling the relation~\eqref{PsiPhi} between $\Psi$ and $\Phi$, this
gives the first part~\eqref{thm2_1} of Theorem~\ref{thm2}.

When $\alpha>\frac12$, one can go exactly through the same steps but
directly on $\delta(x,t)$: writing
\begin{equation}
\delta(x,t)=\eta_t\big[e^{-x}\Psi(x)+\tilde s(x,t)\big],
\end{equation}
then $\tilde s =e^{-x} s$ is solution to 
\begin{equation}
\partial_t \tilde s =\partial_x^2\tilde s +2\partial_x\tilde s
 -\Big(2\tilde\omega -1 +\delta +\frac{\dot\eta_t}{\eta_t}\Big)\tilde s
-\eta_t\partial_x \tilde s
-\left[\frac{\dot\eta_t}{\eta_t}\Psi
+(\delta-\eta_t)\Psi +\eta_t\Psi'\right]e^{-x}.
\end{equation}
One checks that the square bracket multiplied by $e^{-x}$ is still bounded,
then the parenthesis is larger than $\alpha-\frac12$ for $t$ large enough
and the comparison principle still applies and leads to $\max_{x\le0}|\tilde
s| \to 0$.

The second part~\eqref{thm2_2} of Theorem~\ref{thm2} is an easy consequence
of the first part; Apply \eqref{thm2_1} to
$x_t=\mu_t\uppar\beta-\mu_t\uppar\alpha$; as $x_t\to
W\uppar\beta-W\uppar\alpha$, it remains in $[-x_0,0]$ for $t$ large enough
and a well chosen $x_0$. One gets
\begin{equation}
\frac{\beta-\omega\big(W\uppar\alpha+\mu_t\uppar\beta-\mu_t\uppar\alpha\big)}
{\eta_t}
\xrightarrow[t\to\infty]{}\Phi\big(W\uppar\beta\big)-\frac{\Phi(W\uppar\alpha)}
{\omega'(W\uppar\alpha)}\omega'\big(W\uppar\beta\big).
\end{equation}
But
\begin{equation}
\omega\big(W\uppar\alpha+\mu_t\uppar\beta-\mu_t\uppar\alpha\big)
=\beta-\big(\mu_t\uppar\alpha
-\mu_t\uppar\beta
-W\uppar\alpha
+W\uppar\beta
\big)
\big[\omega'\big(W\uppar\beta\big)+o(1)\big],
\end{equation}
which leads to \eqref{thm2_2}.
\end{proof}

\section{Numerical evidence in support of Conjecture~\ref{conj1}}
\label{numev}

To better understand the behaviour of the $\mu_t\uppar\alpha$, we made some
numerical simulation. On a space-time lattice with steps $a$ and $b$,
we simulated the following equation:
\begin{equation}
h(x,t+b) = h(x,t) + \frac{b}{a^2}\Big[h(x-a,t)+h(x+a,t)-2h(x,t)\Big]
		+ b\Big[h(x,t)-h(x,t)^2\Big].
\label{eqlattice}
\end{equation}
We present here results for $a=0.1$ and $b=0.002$, but we also checked
other values of $a$ and $b$ and obtained similar results.

If one linearises \eqref{eqlattice} and looks for solutions of the form
$e^{-\gamma(x-vt)}$, one obtains the following relation
between $v$ and $\gamma$:
\begin{equation}
v(\gamma)=\frac1{\gamma b}\ln\Big[1+\frac b{a^2}\big(e^{\gamma a}+e^{-\gamma
a}-2\big)+b\Big],
\end{equation}
from which one computes the critical velocity $v_c=v(\gamma_c)$:
\begin{equation}
\text{For $a=0.1$ and $b=0.002$},\qquad 
     v_c=1.99684036732\ldots,\qquad
\gamma_c=1.00074727697\ldots
\label{vcgc}
\end{equation}
With $a$ and $b$ small, equation \eqref{eqlattice} is close in some sense
to the Fisher-KPP equation and the critical velocity and critical rate are close
to 2 and 1.

We simulated the front with a step initial condition.
It is expected that the relaxation of the front towards its critical
travelling wave is built from what happens in a region of size $2\sqrt t$
ahead of the front. It is therefore critical to have a good numerical
precision for the small values of $h$. For this reason, the data actually
stored in the computer's memory is $\ln h$ rather than $h$ itself.
On the left of the front, each time $\ln h$ was greater than $-10^{-16}$,
then $\ln h$ was set to 0. On the right of the front, the values of $h$
were computed only up to the position $v_ct + 10\sqrt t +50$; the values of
$h$ on the right of that boundary were approximated to be zero.
The simulation was run up to time 85\,000.

At each time-step, to measure $\mu_t\uppar\alpha$ with a sub-lattice
resolution, the computer looked at the four values of $\ln h$ which are
the closest to $\ln\alpha$ (two above $\ln\alpha$, and two below
$\ln\alpha$). From this four values, 
the interpolating polynomial of degree~3 was built, and the chosen value
for 
$\mu_t\uppar\alpha$ was the one for which this interpolating
polynomial gave $\ln\alpha$.

Figure~\ref{fig1} shows a graph of $\mu_t\uppar{1/2}-\mu_t\uppar\alpha$ as
a function of $1/t$ for different values of $\alpha$ for times larger than
$10^3$. On this scale, the data give some straight lines, suggesting that
$\mu_t\uppar{1/2}-\mu_t\uppar\alpha$ minus its large time limit is of order
$1/t$. This suggests strongly that Conjecture~\ref{conj1} holds for the
step initial condition and, therefore, that if there is a $1/\sqrt t$ term in the
asymptotic expansion of $\mu_t\uppar\alpha$, then the coefficient of this
term is $\alpha$-independent.

\begin{figure}[ht]
\centering
\includegraphics[width=.6\textwidth]{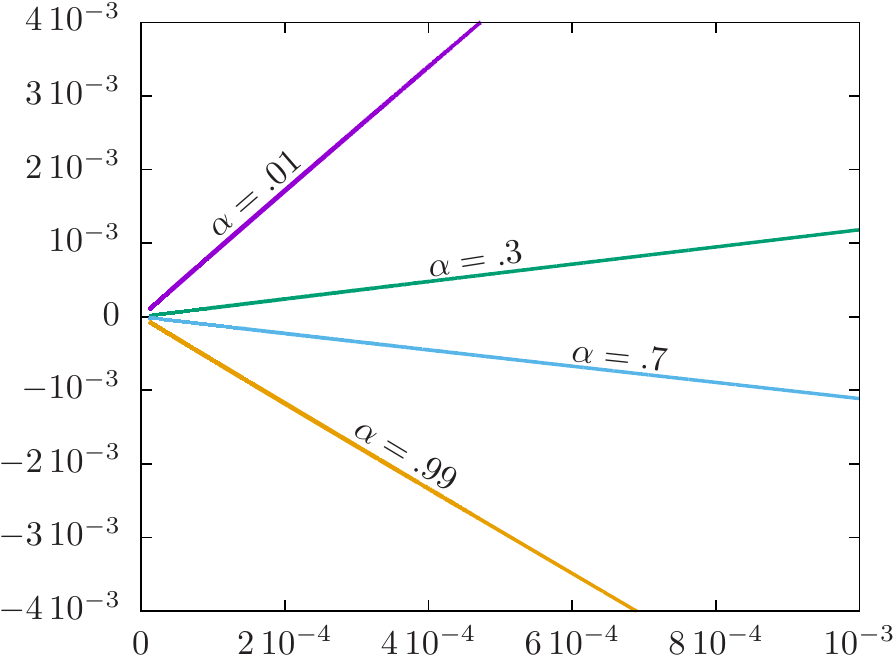}
\caption{$\mu_t\uppar{1/2}-\mu_t\uppar\alpha+\text{Cste}$ as a function of
$t^{-1}$, where the constant is chosen for each $\alpha$ so that the
curves meet at the origin.}
\label{fig1}
\end{figure}

\section{An exact expansion for a discrete solvable model}
\label{further}
In this section, we give some arguments in support
of Conjecture~\ref{conj2}. To that end,
we swap the Fisher-KPP equation we have been studying above for
a model
on a space lattice with
continuous time which was first introduced in
 \cite{Brunet2015} as a front in the universality class of the
Fisher-KPP equation: with $x\in\mathbb Z$ and $t\ge0$,
\begin{equation}
\partial_t u(x,t)= \begin{cases}
0 &\text{if $u(x,t)=1$}\\
u(x,t)+a u(x-1,t) & \text{if $u(x,t)<1$}.
\end{cases}\end{equation}
For this front, the function $v(\gamma)$ is given by
\begin{equation}
v(\gamma)=\frac1\gamma\big[1+e^\gamma\big],
\end{equation}
from which one obtains $v_c$ and $\gamma_c$.

As in \cite{Brunet2015}, we only consider initial conditions $u_0(x)$
such that
\begin{equation}
u_0(x)=1\text{ for $x\le0$},\qquad u_0(x)\in[0,1)\text{ for $x\ge1$},
\qquad u_0(x+1)\le u_0(x),
\end{equation}
and we introduce, for each $x\ge1$, the time $t_x$ at which $u(x,\cdot)$
reaches 1. It is clear that $x\mapsto t_x$ is an increasing sequences and
it was shown in \cite{Brunet2015} how to obtain the asymptotic expansion
of $t_x$ for large $x$ up to the term $1/\sqrt x$. Pushing the same technique
one step further, we obtain that for an initial condition $u_0(x)$ that
decays fast enough (see below), then
\begin{equation}
t_x = \frac{x}{v_c}+\frac1{\gamma_cv_c}\Big[\frac32\ln x +c
+\frac{d}{\sqrt x} +f \frac{\ln x}{x}+\mathcal O\Big(\frac1x\Big)\Big]
\quad\text{with }d=3\sqrt{\pi\frac{2v_c}{v''(\gamma_c)}}\gamma_c ^{-3/2},
\label{tx}
\end{equation}
where $c$ depends in a fine way on the initial condition and where $f$ is
a complicated expression involving $v_c$, $\gamma_c$, $v''(\gamma_c)$ and
$v'''(\gamma_c)$. One can check
that the $\ln x$ term in \eqref{tx} is valid if $\sum_x  x u_0(x)
e^{\gamma_c x}<\infty$  as in \eqref{stronger},
the $1/\sqrt x$ term
is correct if $\sum_x x^2 u_0(x)e^{\gamma_c x}<\infty$, and the $(\ln
x)/x$ term is correct if $\sum_x x^3 u_0(x)e^{\gamma_c x}<\infty$.
This asymptotic expansion
was in \cite{Brunet2015} up to the $1/\sqrt x$ term.

If one inverts formally \eqref{tx}, one obtains
\begin{equation}
x_t = v_c t -\frac3{2\gamma_c}\ln t + c' - \frac{d'}{\sqrt t}+
f'\frac{\ln t}t+\mathcal O\Big(\frac1t\Big),
\label{xt}
\end{equation}
with
\begin{equation}
d'=\frac{d}{\gamma_c\sqrt{v_c}}=3\sqrt{\pi\frac{2}{v''(\gamma_c)}}\gamma_c
^{-5/2},
\qquad
f'=\frac{9}{4\gamma_c^2v_c}-\frac f{\gamma_c v_c}=
\frac{54-54\ln
2 +3\gamma_c\frac{v'''(\gamma_c)}{v''(\gamma_c)}}{4\gamma_c^4v''(\gamma_c)}.
\label{d'f'}
\end{equation}
(The derivation of the value of $f$ or $f'$ is mechanical and tedious and
of little interest. It is a simple application of the techniques explained
in \cite{Brunet2015} pushed one step further.)

Conjecture~\ref{conj2} relies simply on the idea that the asymptotic
expansion of the $\mu_t\uppar\alpha$ in the Fisher-KPP equation is
also
given by \eqref{xt} with an $\alpha$-dependent constant $c'$, as in
\eqref{Bram}. However, from Conjecture~\ref{conj1}, the coefficient of the
$(\ln t)/t$ term should be independent of $\alpha$.
If one assumes that \eqref{d'f'}
also holds for the Fisher-KPP, one obtains,
with $\gamma_c=1$, $v''(\gamma_c)=2$ and $v'''(\gamma_c)=-6$, that
\begin{equation}
\mu_t\uppar\alpha = 2t-\frac32\ln t +C + W\uppar\alpha
-\frac{3\sqrt{\pi}}{\sqrt t} +\frac98(5-6\ln2)\frac{\ln t}t+\mathcal
O\Big(\frac1t\Big), 
\label{conj}
\end{equation}
where one recognizes in particular the Ebert and van Saarloos term
\cite{evs}.

\medskip

We tried to see if we could see this $(\ln t)/t$ in the numerical
simulations we discussed in Section~\ref{numev}. To do
this, we first subtracted all the known terms in $\mu_t\uppar\alpha$ and
computed
\begin{equation}
\delta_t\uppar\alpha = \mu_t\uppar\alpha -v_c t +\frac3{2\gamma_c}\ln t +
\frac{d'}{\sqrt t}.
\end{equation}
(Of course, we used $v_c$ and $\gamma_c$ given by \eqref{vcgc}. Similarly,
the value used for $d'$ is not $3\sqrt\pi$ but the value given in
\eqref{d'f'}.) We then fitted (using \texttt{gnuplot}) the
$\delta_t\uppar\alpha$ to extract the parameters we needed. Performing this
fit is difficult: we fit against asymptotic expansions, so we need to
consider large times only. On the other hand, if one fits over too narrow
an interval, it is very hard to distinguish between $(\ln t)/t$ and $1/t$.
To overcome these difficulties, it seemed necessary to fit over a large time
interval (to be able to distinguish a $\ln t$ from a constant) and to
include more terms in the expansion to gain in accuracy at smaller times.
To allow the reader to better evaluate our numerical results, we present
results for several fits: we used the following candidates for fitting the data:
\begin{equation}
\begin{gathered}
\text{(a)}\quad \delta_t = C +\frac{f'\ln t + g}t,\qquad
\text{(b)}\quad \delta_t = C +\frac{f'\ln t + g}t+\frac{h\ln
t +i}{t^{3/2}},\\
\text{(c)}\quad \delta_t = C +\frac{f'\ln t + g}t+\frac{h\ln t +i}{t^{3/2}}
+\frac{j\ln t +k}{t^2},
\end{gathered}
\label{fitf}
\end{equation}
over different ranges of $t$.
 The values of $f'$ extracted from
the fits are presented
in Table~\ref{fit}. When using function~(a), these values depend a lot on
the chosen range. This is because the effects of smaller terms in the
expansion is not negligible enough for the values of $t$ that we could reach.
Function~(b) seems to suffer a little bit from this effect, but to a much
lesser extent. Function~(c) leads to a remarkable uniformity of values for
$f'$.
According to \eqref{d'f'}, the value of $f'$ should be 0.948\ldots,
which is in quite  good agreement with the fitted values.
(For the Fisher-KPP, the value for $f'$ in \eqref{conj} is 0.946\ldots).

\def\dat#1#2#3#4#5{\begin{tabular}{c}#1\\#2\\#3\\#4\\#5\end{tabular}}
\begin{table}[!ht]
\centering
\begin{tabular}{|l||c|c|c|}
\hline
& on [100,85000]& on [1000,85000]&on [10000,85000]\\
\hline\hline
With function (a) &
\dat{1.642}{1.639}{1.630}{1.619}{1.501}&
\dat{1.355}{1.302}{1.288}{1.274}{1.177}&
\dat{1.164}{1.131}{1.123}{1.114}{1.060}\\
\hline
With function (b) &
\dat{0.805}{0.896}{0.912}{0.926}{0.979}&
\dat{0.907}{0.928}{0.932}{0.937}{0.958}&
\dat{0.933}{0.938}{0.939}{0.941}{0.947}\\
\hline
With function (c) &
\dat{0.938}{0.945}{0.945}{0.944}{0.935}&
\dat{0.945}{0.944}{0.944}{0.944}{0.943}&
\dat{0.937}{0.936}{0.938}{0.938}{0.935}\\
\hline
\end{tabular}
\caption{The value of $f'$ when fitting the $\delta_t\uppar\alpha$ against
the functions in~\eqref{fitf} over three time ranges. In each cell, the
five values correspond from top to bottom to $\alpha=0.01$, $\alpha=0.3$,
$\alpha=0.5$, $\alpha=0.7$ and $\alpha=0.99$.}
\label{fit}
\end{table}

On Figure~\ref{fig2},
$t(\delta_t\uppar\alpha-C\uppar\alpha)$ is plotted
as a function of $t$ on a log-lin
scale, using for $C\uppar\alpha$ the value obtained from the fit with
function~(c) over $[1000,85000]$. The curves seem to have an
asymptote, which would indicate that
\begin{itemize}
\item The Ebert-van Saarloos correction in $1/\sqrt t$ is indeed the first
vanishing term in the asymptotic expansion of $\mu_t\uppar\alpha$, with the
predicted coefficient. (If the prefactor were wrong, the curves in
Figure~\ref{fig2} would blow up exponentially fast in the log-lin scale.)
\item After the Ebert-van Saarloos correction, the next term in the
asymptotic expansion of $\mu_t\uppar\alpha$
seems indeed to be a $(\ln t)/t$.
\item The prefactor of the $(\ln t)/t$, which is given by the slope of the
asymptote of the curves in Figure~\ref{fig2}, is possibly equal
to the $\alpha$-independent value predicted in~\eqref{d'f'}.
\end{itemize}

\begin{figure}[!ht]
\centering
\includegraphics[width=.6\textwidth]{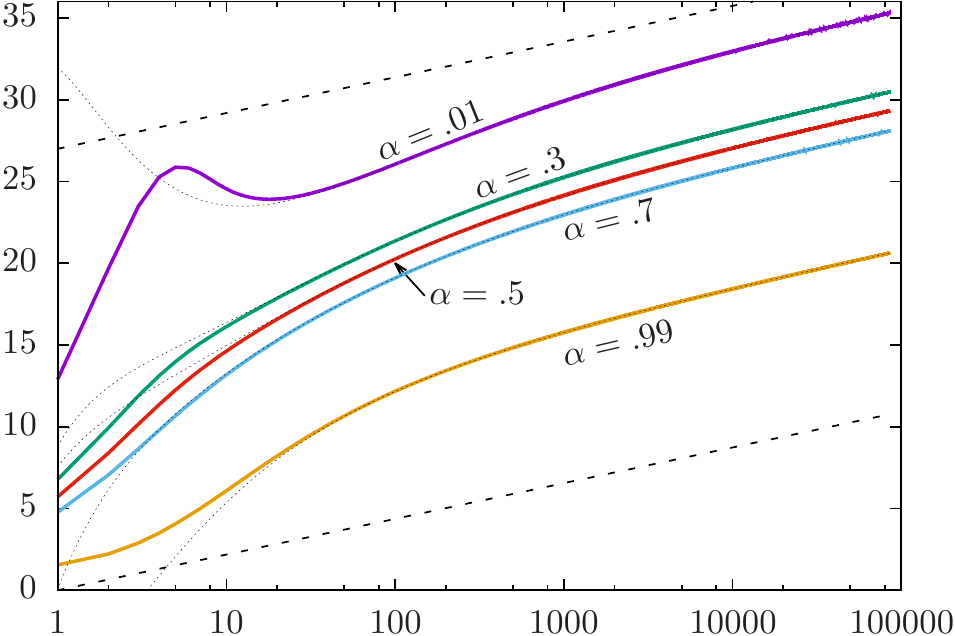}
\caption{$t(\delta_t\uppar\alpha
-C\uppar\alpha)$ as a function of $t$, on a log-lin scale.
The value of $C\uppar\alpha$ was obtained from the fit with function~(c)
over $[1000,85000]$. The small dotted lines show, for each $\alpha$ the
result of the fit.
The two straight
dashed lines are $0.946\ln t + \text{Cste}$. }
\label{fig2}
\end{figure}

\bibliographystyle{abbrv}
\bibliography{bibliography}

\end{document}